\numberwithin{equation}{section}
\newcommand{\R}{\mathbb{R}}
\newcommand{\norm}[1]{\left \lVert   #1 \right \rVert }
\newcommand{\bra}[1]{\left \langle #1 \right \rangle}  
\newtheorem{theorem}{Theorem}[section]
\newtheorem{lemma}{Lemma}[section]
\newtheorem{remark}{Remark}[section]
\title[Zakharov-Rubenchik/Benney-Roskes system]{Revisiting the Cauchy problem for the Zakharov-Rubenchik/Benney-Roskes system}
\author[H. Luong]{Hung Luong}
\address{Institute of Mathematics, VAST, 18B Hoang Quoc Viet street\\
	Cau Giay, Ha Noi, Vietnam}
\email{lthung@math.ac.vn}
\date{\today}
\subjclass{Primary: 35Q35; Secondary: 35Q55.}
\keywords{wave propagation, Cauchy problem, line soliton, transverse stability, deep water models}
\begin{document}
	\begin{abstract}
		In this paper, we revisit the Cauchy problem for the Zakharov-Rubenchik/Benney-Roskes system. Our method is based on the dispersive estimates and the suitable Bourgain's spaces. We then, obtain the local well-posedness of the solution with the main component \(\psi\) belongs to \(H^1(\R^d)\) (\(d=2, 3\)) which is actually the energy space corresponding to this component. Our result also suggests a potential approach to the problem of finding exact existence time scale  for the solution of Benney-Roskes model in the context of water waves.
	\end{abstract}
		\maketitle
	
	\section{Introduction}
In this paper we revisit the Cauchy problem for the two or three-dimensional Zakharov-Rubenchik (or Benney-Roskes) system. We use the argument introduced by Bourgain (for more detail see \cite{Ginibre1997}) to obtain a better local existence result in the sense of functional spaces and of course it strengthens the results obtained in \cite{Luong2018} and \cite{Ponce2005}. Furthermore, this method suggests a potential approach to more challenge problems such as the Cauchy problem for the full dispersion Benney-Roskes system, or  finding  exact existence time scale in order to justify the Benney-Roskes system as an asymptotic model in the context of water waves.

 Let us mention that the Zakharov-Rubenchik/Benney-Roskes system (ZR/BR) is a fundamental and generic asymptotic system since it was actually derived in various physical contexts. \\
In the notations of \cite{Tzvetkov2018} (see also \cite{Passot1996} where it is used in the context of Alfvén waves in dispersive MHD), the Zakharov-Rubenchik system has the form
\begin{equation}
	\label{Intro Z-R system 1}
	\left \{
	\begin{split}
		& \psi_t - \sigma_3 \psi_x - i \delta \psi_{xx} - i \sigma_1 \Delta_{\perp}\psi + i \left \{  \sigma_2 |\psi|^2 +W (\rho+ D\phi_x) \right \} \psi = 0, \\
		& \rho_t + \Delta \phi + D (|\psi|^2)_x =0, \\
		& \phi_t + \dfrac{1}{M^2} \rho + |\psi|^2 = 0,
	\end{split}
	\right.
\end{equation}
where $\psi: \R \times \R^d \rightarrow \mathbb{C}$, $\rho,\phi : \R \times \R^d \rightarrow \R, d=2,3$ describe  the fast oscillating and, resp., acoustic type waves.

Here  $\sigma_1, \sigma_2, \sigma_3= \pm 1,$ $W>0$ measures the strength of the coupling with acoustic type waves, $M>0$ is a Mach number, $D\in \R$  is associated
to the Doppler shift due to the medium velocity and $\delta \in \R$ is a nondimensional dispersion coefficient.

When $D=0$ in \eqref{Intro Z-R system 1} the Zakharov-Rubenchik system reduces to the classical (scalar) Zakharov system (see {\it eg} Chapter V in \cite{Sulem1999}). More precisely, in the framework of \eqref{Intro Z-R system 1}, one gets
\begin{equation}
	\label{Intro Zakh}
	\left \{
	\begin{split}
		& \psi_t - \sigma_3 \psi_x - i \delta \psi_{xx} - i \sigma_1 \Delta_{\perp}\psi + i \left \{  \sigma_2 |\psi|^2 +W \rho) \right \} \psi = 0, \\
		& \rho_{tt} -\frac{1}{M^2} \Delta \rho - \Delta (|\psi|^2) =0,
	\end{split}
	\right.
\end{equation}
which is a form of the two or three dimensional Zakharov system. Note however that the second order operator in the first equation is not necessarily elliptic.

The local well-posedness in $H^s(\R^d) \times H^{s-1/2}(\R^d) \times H^{s+1/2}(\R^d)$ with $s>\frac{d}{2}, d=2,3$ for \eqref{Intro Z-R system 1} was obtained in \cite{Ponce2005} by using the local  smoothing property of the free Schr\"odinger operator after reducing the system to a quasilinear (non local) Schr\"{o}dinger equation. In \cite{Luong2018}, we assume \(\delta \sigma_1>0\) then by using method of Schochet-Weistein, we obtain the local well-posedness in $H^{s+1}(\R^2) \times H^{s}(\R^2) \times H^{s+1}(\R^2)$ with \(s>2\). Let us mention that the value of the latter result lies on the Schochet-Weistein method. In which, we transform \eqref{Intro Z-R system 1} into a symmetric nonlinear hyperbolic system, then by using an energy method, we prove the local well-posedness for \eqref{Intro Z-R system 1} perturbed by a line solitary wave. This is the first step in the framework of ``transverse stability'' problem for the line soliton.

The situation is better understood in spatial dimension one. Oliveira \cite{Oliveira2003} proved the local (thus global using the conservation laws below) well-posedness in $H^2(\R)\times H^1(\R)\times H^1(\R).$ This result was improved in \cite {Linares2009} where in particular global well-posedness was established in the energy space $H^1(\R)\times L^2(\R) \times L^2(\R).$

Let us recall these following conservation quantities with respect to \eqref{Intro Z-R system 1},
\label{conservations}
\begin{itemize}
	\item [(1)] Mass conservation:
	\begin{equation}
		\label{Mass}
		\frac{1}{2} \frac{d}{dt} \int_{\R^n} |\psi(x,t)|^2 \, dx =0.
	\end{equation}
	\item [(2)] Energy conservation:
	\begin{equation}
		\label{Energy}
		\begin{split}
			& \frac{1}{2} \frac{d}{dt} \int_{\R^n} \big (    \varepsilon |\partial_z \psi|^2 + \sigma_1 |\nabla_\perp \psi|^2 + \frac{W}{2M} \rho^2  + \frac{W}{2} |\nabla \varphi|^2 + \sigma_3 W \rho \partial_z \varphi + \frac{\sigma_2}{2} |\psi|^4  \\
			& \quad + W \rho |\psi|^2 + DW |\psi|^2 \partial_z \varphi \big ) \, dx.
		\end{split}
	\end{equation}
\end{itemize}
Those quantities suggest the energy space of \eqref{Intro Z-R system 1} is \(H^1(\R^d) \times L^2 (\R^d) \times H^1(\R^d) \) and with relevant assumptions on coefficients one gets  the existence of a global weak solution of \eqref{Intro Z-R system 1} in \cite{Ponce2005} by extending the local solution. A similar result was obtained for the perturbation of \eqref{Intro Z-R system 1} by a the so-called ``dark'' line soliton in \cite{Luong2018}.

Our goal is to establish a local well-posedness result in the energy space for \eqref{Intro Z-R system 1}, however the technical difficulty turns out that we are only able to get the \(H^1(\R^d)\) result for the first component \(\psi\) which we consider as the main part of the solution \((\psi, \rho, \phi)\). Our main result is stated in below theorem.
\begin{theorem}
	\label{theorem main}
Let \(d=2 \text{ or } 3\).	For any initial data \((\psi_0, \rho_0, \phi_0) \in H^1(\R^d) \times H^l(\R^d) \times H^{l+1}(\R^d) \), there exists \(T>0\) such that \eqref{Intro Z-R system 1} admits a unique solution \((\psi, \rho, \phi) \in C(0,T; H^1(\R^d)) \times C(0,T; H^l(\R^d)) \times C(0,T; H^{l+1}(\R^d))\). Where
\begin{align*}
	& \frac{2}{3} < l \leq 1 \text{ if } d=2, \\
	& \frac{5}{6} < l < 1 \text{ if } d=3.
\end{align*}
\end{theorem}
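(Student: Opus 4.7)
The plan is to apply the Fourier restriction norm (Bourgain $X^{s,b}$) method tailored to the mixed Schrödinger/wave structure of \eqref{Intro Z-R system 1}. I first diagonalize the acoustic subsystem: differentiating the $\rho$-equation in time and eliminating $\phi_t$ using the $\phi$-equation shows that both $\rho$ and $\phi$ satisfy wave equations of speed $1/M$, so one can introduce variables $u_\pm$ that transport along the characteristic surfaces $\tau = \pm |\xi|/M$. The $\psi$-equation has a (possibly non-elliptic) Schrödinger-type symbol with characteristic surface
\[
\Sigma_S = \bigl\{ \tau = \sigma_3 \xi_1 - \delta \xi_1^2 - \sigma_1 |\xi_\perp|^2 \bigr\}.
\]
I thus work with the Bourgain space $X^{1,b}_S$ for $\psi$ and with wave-type spaces $X^{l,b}_\pm$ (resp.\ $X^{l+1,b}_\pm$ for the potential component), each localized in time through a cutoff $\eta(t/T)$, so that a positive power of $T$ appears through the embedding $X^{s,b} \hookrightarrow C([0,T];H^s(\R^d))$ for $b > 1/2$.

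After applying Duhamel's formula, local well-posedness reduces to three families of nonlinear estimates: the cubic self-interaction $\||\psi|^2\psi\|_{X^{1,b-1}_S} \lesssim \|\psi\|_{X^{1,b}_S}^3$, handled through the Strichartz and $L^4_{t,x}$ inequalities after an orthogonality decomposition to accommodate the (possibly non-elliptic) principal symbol; the Schrödinger--wave coupling
\[
\|W(\rho + D\phi_x)\psi\|_{X^{1,b-1}_S} \lesssim \bigl(\|\rho\|_{X^{l,b}_W} + \|\phi\|_{X^{l+1,b}_W}\bigr)\|\psi\|_{X^{1,b}_S};
\]
and the resonant forcing $\|\partial_x(|\psi|^2)\|_{X^{l,b-1}_W} \lesssim \|\psi\|_{X^{1,b}_S}^2$ together with its companion estimate for $|\psi|^2$ in $X^{l+1,b-1}_W$.

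The main obstacle lies in the coupling and forcing estimates, which are the analogues for the present model of the bilinear Schrödinger--wave estimates of Ginibre--Tsutsumi--Velo \cite{Ginibre1997}. By Plancherel and duality they reduce to weighted $L^2$ integrals over products of the three characteristic surfaces, whose analysis hinges on the resonance function
\[
H(\xi, \xi_1) = p_S(\xi) - p_S(\xi - \xi_1) \mp \frac{|\xi_1|}{M},
\]
obtained by combining the three dispersion relations; the size of $H$ furnishes the gain available in at least one of the three modulation weights $\langle\tau - p_S(\xi)\rangle$, $\langle\tau_2 - p_S(\xi - \xi_1)\rangle$, $\langle\tau_1 \mp |\xi_1|/M\rangle$. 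A Littlewood--Paley decomposition combined with a case analysis based on which modulation dominates forces the dimensional thresholds $l > 2/3$ for $d=2$ and $l > 5/6$ for $d=3$; the borderline regime is the low-modulation, high-frequency one, where the coupling with the Doppler derivative $D\phi_x$ (which costs an extra derivative on the wave side) is most demanding. Once these estimates are in place, a Picard iteration on a suitable ball of the product Bourgain space with $b$ slightly above $1/2$ contracts for $T$ sufficiently small, and the same estimates applied to differences of solutions yield uniqueness and continuous dependence on the data. Persistence in the stated spaces $C([0,T];H^1(\R^d)) \times C([0,T];H^l(\R^d)) \times C([0,T];H^{l+1}(\R^d))$ follows from the $X^{s,b}$ embedding.
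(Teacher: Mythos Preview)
Your outline follows the same Ginibre--Tsutsumi--Velo framework as the paper, but two concrete points in your sketch do not match what actually makes the argument close, and at least the first is a genuine gap.

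\textbf{The wave-side exponent cannot be taken above $1/2$.} You propose to run the contraction with a single $b$ slightly above $1/2$ for all components and then invoke the embedding $X^{s,b}\hookrightarrow C_tH^s_x$. The paper does \emph{not} do this: for the wave variables it takes $b_2=\tfrac12-\tfrac{k_2}{2}<\tfrac12$ (with $k_2=l$), and this choice is forced by the nonlinear estimates. Because $b_2<\tfrac12$, the $X^{l,b_2}_W$ space no longer embeds into $C_tH^l_x$, and the paper compensates by also controlling the forcing in the auxiliary space $Y^{l}$ (Lemma~\ref{C(R,H^s)} and estimates \eqref{nl rho 3}--\eqref{nl rho 4}, \eqref{nl phi 3}--\eqref{nl phi 4}). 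If you insist on $b>\tfrac12$ on the wave side you will not be able to close the forcing estimate at the stated regularities; this is exactly the obstruction that GTV already encountered for the Zakharov system.

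\textbf{The thresholds on $l$ do not come from the $D\phi_x$ coupling.} You locate the borderline in the Schr\"odinger--wave product $W(\rho+D\phi_x)\psi$. In the paper the sharpest constraint is produced instead by the time-derivative forcing $\omega^{-1}(|\psi|^2)_{xt}$ appearing in the reduced wave equation for $\rho_\pm$ (estimate $I_5$). After using \eqref{ineq 5} to absorb $|\tau|^{1-c_2}$, the H\"older/Strichartz balancing in $I_{521}$ yields the condition \eqref{I521 auxi 3}, namely $b_2<\tfrac16$ for $d=2$ and $b_2<\tfrac1{12}$ for $d=3$, which through $l=k_2=1-2b_2$ gives precisely $l>\tfrac23$ and $l>\tfrac56$. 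Your direct first-order diagonalization of the acoustic system avoids the explicit $(|\psi|^2)_t$, but the same derivative reappears as the term $\omega|\psi|^2$ in the forcing for $u_\pm$, and you will meet the identical constraint there; your write-up should identify this term and carry out the analogue of the $I_{521}$ computation rather than attributing the threshold to the Doppler coupling.

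A smaller point: the paper replaces $\phi$ by $\varphi=\phi_x$ precisely to keep the symbol $|\xi_1|/|\xi|$ bounded and avoid the low-frequency singularity of $\omega^{-1}$; your outline should indicate how you handle this when you write the forcing for $u_\pm$.
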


It is also important to mention the following versions of \eqref{Intro Z-R system 1}
\begin{equation}
	\label{Intro Z-R with epsilon}
	\left \{
	\begin{split}
		& \psi_t - \epsilon \sigma_3 \psi_x - i \epsilon \delta \psi_{xx} - i\epsilon \sigma_1 \Delta_{\perp}\psi + i \epsilon \left \{  \sigma_2 |\psi|^2 +W (\rho+ D\phi_x) \right \} \psi = 0, \\
		& \rho_t + \Delta \phi + D (|\psi|^2)_x =0, \\
		& \phi_t + \dfrac{1}{M^2} \rho + |\psi|^2 = 0,
	\end{split}
	\right.
\end{equation}
In \eqref{Intro Z-R with epsilon} the parameter \(\epsilon\) is added to the first equation as the ``model parameter'' when we consider \eqref{Intro Z-R system 1} as the Benney-Roskes system in the context of water waves problem. That leads to very important problem of proving \eqref{Intro Z-R with epsilon} is well-posed in the existence time scale \(O(1/\epsilon)\). Let us mention that the methods used in \cite{Luong2018} and \cite{Ponce2005} show the existence time scale \(O(1)\) which is not sufficient to justify \eqref{Intro Z-R with epsilon} as an asymptotic model of water waves equation. As a work in progress, we expect that with the method using in this paper we can get at least \(O(1/\epsilon^\alpha)\) with \(\alpha>0\). In our opinion, it is technically difficult and the method representing in this paper is a necessary preparation for the latter work.

The paper is organized as follows. In the Section \ref{Section setting}, we setup our problem and recall the general linear estimates using the Bourgain spaces (as in \cite{Ginibre1997}), in the latter part, we present our argument with the necessary estimates. Section \ref{Section preliminary estimate} is devoted to the preliminary estimate. In Section \ref{Section nonlinear estimate}, we present the nonlinear estimate and finalize the proof of Theorem \ref{theorem main}. Finally, we give the conclusion in Section \ref{Section Conclusion}.

Throughout this paper we use the following  notations, the others will be defined later if needed.
 \begin{itemize}
\item [1)] $\mathcal{F}, \, \mathcal{F}_t, \, \mathcal{F}_x, \, \mathcal{F}_y$ and $\mathcal{F}^{-1}$ denote  the  Fourier transform of a function in spacetime, time, space variable and the inverse Fourier transform respectively. We also use `` $\widehat{}$ ''  as the short notation of the space-time Fourier transform.
\item [2)] $H^s, H^{s,b}$ are the Sobolev's spaces with the \(L^2\) norm in space and time variables. Notation \(L_t^qL_x^r\) stands for mixed norm in space and time, \(\norm{u}_X\) is the standard norm of function \(u\) in the functional space \(X\).
\item [3)] For vector calculation, we use  $\bra{\xi} = (1 + |\xi|^2)^{1/2}$ where $\xi \in \R^d$.
\item [4)] $C$ will be a general constant unless otherwise explicitly indicated. $f \lesssim g$ (or $f \gtrsim g$) means that there exits a constant $C$ such that $f \leq C g$ (or $ f \geq C g$).
\end{itemize}

	\section{Linear estimates and the setting of problem} \label{Section setting}
	It is worth noticing that our main estimates hold in the general case of Schr\"odinger operator regardless of the sign of \(\delta\) and \(\sigma\) in \eqref{Intro Z-R system 1}. Thus,
	for simplicity, we consider  $\delta = \sigma_1=M=1, \, \sigma_3=0$ but keep the other parameters $W,D$ for futher purpose. That leads to the following system
	\begin{equation}
		\label{ZR LMS}
		\left \{
		\begin{split}
			&  i \psi_t +  \Delta \psi =  \sigma_2 |\psi|^2 \psi +  W \rho \psi +  WD \phi_x \psi, \\ 
			& \rho_t + \Delta \phi + D(|\psi|^2)_x=0,\\
			& \phi_t +  \rho + |\psi|^2=0,
		\end{split}
		\right.
	\end{equation}
	with initial data $(\psi_0, \rho_0, \phi_0)$, the space variable belongs to \(\R^d\) with \(d=2 \text{ or } 3\).
	
	
	We decouple $\rho$ and $\phi$ in the last two equations of \eqref{ZR LMS} by taking the time derivative of both equations then replace them by two wave type equations as follows
	\begin{equation}
		\label{ZR Hung}
		\left \{
		\begin{split}
			& i \psi_t +   \Delta \psi =  \sigma_2 |\psi|^2 \psi +  W \rho \psi +   WD \phi_x \psi, \\
			& \rho_{tt} - \Delta \rho = \Delta (|\psi|^2) -  D(|\psi|^2)_{xt}, \\
			& \phi_{tt} - \Delta \phi =  D(|\psi|^2)_x - (|\psi|^2)_t.
		\end{split}
		\right.
	\end{equation}
	with initial data of the form $(\psi_0, \rho_0, \phi_0, \rho_1, \phi_1)$.
	
	Set $\omega = (-\Delta)^{1/2}$, and define the positive and negative parts of $\rho, \phi$ as
	\[
	\left \{
	\begin{split}
		& \rho_{\pm} = \rho \pm i \omega^{-1} \partial_t \rho, \\
		& \phi_{\pm} = \phi \pm i \omega^{-1} \partial_t \phi.
	\end{split}
	\right.
	\]
	Then $(i\partial_t - \omega) \rho_{\pm} = \mp \omega^{-1} \square \rho$ and $\Delta = - \omega^2$, where
	\[
	\square \rho = ( \partial_t^2 - \Delta) \rho.
	\]
	Therefore, \eqref{ZR Hung} is reduced as
	\begin{equation}
		\label{old plus minus system}
		\left \{
		\begin{split}
			& i \psi_t +  \Delta \psi =   \sigma_2 |\psi|^2 \psi +   W\left ( \frac{\rho_{-} + \rho_{+}}{2} \right )\psi +  WD \left ( \frac{\phi_{+} + \phi_{-}}{2} \right)_x \psi, \\
			& (i \partial_t \mp \omega)\rho_{\pm} = \pm \omega^{-1} \Delta (|\psi|^2) \pm   D \omega^{-1} (|\psi|^2)_{xt} , \\
			& (i\partial_t \mp \omega) \phi_{\pm} = \mp   D \omega^{-1} (|\psi|^2)_x \pm \omega^{-1} (|\psi|^2)_t .
		\end{split}
		\right.
	\end{equation}
	The symbol of $\omega^{-1}$ is $1/|\xi|$ which is unbounded near $0$, so we will consider $\varphi = \phi_x$ instead of $\phi$ in \eqref{old plus minus system} in order to deal with the symbol $|\xi_1|/|\xi|$ later. That idea leads to
	\begin{equation}
		\label{plus minus system}
		\left \{
		\begin{split}
			& i \psi_t +  \Delta \psi =    \sigma_2 |\psi|^2 \psi +   W\left ( \frac{\rho_{-} + \rho_{+}}{2} \right )\psi +  WD \left ( \frac{\varphi_{+} + \varphi_{-}}{2}  \right ) \psi, \\
			& (i \partial_t \mp \omega)\rho_{\pm} = \pm \omega^{-1} \Delta (|\psi|^2) \pm   D \omega^{-1} (|\psi|^2)_{xt} , \\
			& (i\partial_t \mp \omega) \varphi_{\pm} = \mp   D \omega^{-1} (|\psi|^2)_{xx} \pm \omega^{-1} (|\psi|^2)_{xt} .
		\end{split}
		\right.
	\end{equation}

	Next, we present the general linear estimates and the construction of Bourgain spaces. Then, we rewrite the original equation into the form of an integral equation  using the Duhamel formula, introduce the cut-off equations (in time) those are crucial steps of using standard fixed point technique as for other dispersive equations.\\
	Each equation of \eqref{plus minus system} has the form
	\begin{equation}
		\label{abstract equation}
		i\partial_t u = \textbf{p}(-i\nabla) u + \textbf{q}(u),
	\end{equation}
	where $\textbf{p}$ is a real  function defined in $\R^d$ and $\textbf{q}$ is a nonlinear function. The Cauchy problem for \eqref{abstract equation} with initial data $u_0$ is rewritten as the integral equation
	\begin{equation}
		\label{abstract 2}
		u(t) = \textbf{U}(t)u_0 - i \int_0^t \textbf{U}(t-s) \textbf{q}(u(s)) ds = \textbf{U}(t)u_0 - i \textbf{U}*_R \textbf{q}(u),
	\end{equation} 
	where $\textbf{U}(t)=e^{-it\textbf{p}(-i\nabla)}$ is the unitary group defines the free evolution of \eqref{abstract equation} and $*_R$ denotes the retarded convolution in time operator.
In order to study the local (in time) Cauchy problem, we introduce the cut-off function \(\lambda(t)\).\\ $\lambda(t)\in \mathcal{C}^\infty(\R, \R^+)$ be even with $0 \leq \lambda \leq 1, \, \lambda(t)=1$ for $|t|<1, \, \lambda(t)=0$ for $|t|>2$ and let $\lambda_T=\lambda_1(t/T)$ for $0<T \leq 1$.\\
	Then \eqref{abstract 2} can be replaced by a cut-off equation
	\begin{equation}
		\label{abstract cutoff}
		u(t)= \lambda(t) \textbf{U}(t)u_0 - i \lambda_T(t) \int_0^t \textbf{U}(t-s)\textbf{q}(u(s)) ds.
	\end{equation}
	Note that \eqref{abstract cutoff} is equivalent to 
	\begin{equation}
		\label{abstract cutoff 2}
		u(t)= \lambda(t) \textbf{U}(t)u_0 - i \lambda_T(t) \int_0^t \textbf{U}(t-s)\textbf{q}(\lambda_{2T}(s)u(s)) ds,
	\end{equation}
	that is usefull for the nonlinear estimates where we want to get positive order of \(T\).
	
	We define below some general functional spaces related to the unitary group $\textbf{U}(t)$. Then, we define the functional spaces corresponding  to each equation of \eqref{plus minus system}.
	\begin{itemize}
		\item [1)] $H^{s,b}$ denotes the space time Sobolev space with the norm
		\[
		\norm{u}_{H^{s,b}} = \norm{\bra{\xi}^s \bra{\tau}^b \widehat{u}(\xi, \tau)}_2.
		\]
		\item [2)] $X^{s,b}$ denotes the Bourgain space associated to the operator $\textbf{p}(\xi)$ and the unitary group $\textbf{U}(t)$
		\[
		\norm{u}_{X^{s,b}} = \norm{\bra{\xi}^s \bra{\tau + \textbf{p}(\xi)}^b}_2.
		\]
		We can also define $X^{s,b}$ via the  equality
		\[
		\norm{u}_{X^{s,b}} = \norm{\textbf{U}(-t)u}_{H^{s,b}},
		\]
		this is the motivation of introducing the Bourgain space since it helps  eliminating the group \(\textbf{U}(t)\) on the linear term of \eqref{abstract cutoff} and \eqref{abstract cutoff 2}.
		\item [3)] An auxiliary space $Y^s$ is introduced to complete the embedding of \(X^{s,b}\) into \(C(\R, H^s(\R^d))\),
		\[
		\norm{u	}_{Y^s} = \norm{\bra{\xi}^s \bra{\tau + \textbf{p}(\xi)}^{-1} \, \widehat{u} (\xi, \tau)}_{L^2_{\xi} L^1_\tau}.
		\]
	\end{itemize}
	
	With those functional spaces we need the following linear estimates in order to evaluate the inhomogenous terms of \eqref{abstract cutoff} and \eqref{abstract cutoff 2}, for the proofs we refer to \cite{Ginibre1997}.
	\begin{lemma} 
		\label{Lemma 2.1 in [1]}
		(i) Let $  b' \leq 0 \leq b \leq b' +1  $ and $ T \leq 1 $. Then
		\begin{equation}
			\label{linear estimate 1}
			\norm{\lambda_T \, \textbf{U} *_R \textbf{q}}_{X^{s,b}} 
			 \lesssim \left ( T^{1-b+b'} \norm{\textbf{q}}_{X^{s,b'}} + T^{1/2 - b} \norm{ \textbf{q}}_{Y^s} \right ),
		\end{equation}
		
		(ii) Suppose in addition that $b' > -1/2$. Then
		\begin{equation}
			\label{linear estimate 2}
			\norm{\lambda_T \, \textbf{U} *_R \textbf{q}}_{X^{s,b}} \lesssim T^{1-b+b'} \norm{\textbf{q}}_{X^{s,b'}}.
		\end{equation}
	\end{lemma}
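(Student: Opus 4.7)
The plan is to reduce both estimates to a scalar Fourier-analytic inequality in the time variable. Setting $g(t)=\textbf{U}(-t)\textbf{q}(t)$ and using the identity $\|u\|_{X^{s,b}}=\|\textbf{U}(-t)u\|_{H^{s,b}}$ together with the change of variable $\tau\mapsto\tau-\textbf{p}(\xi)$ in the $\tau$-integral (which leaves the $L^1_\tau$ norm of $Y^s$ invariant), the Duhamel term transforms into $\textbf{U}(-t)(\textbf{U}*_R\textbf{q})(t)=\int_0^t g(s)\,ds$. Since the Fourier multiplier $\langle\xi\rangle^s$ commutes with the time operations, it suffices to prove for a scalar function $h(t)$ the one-dimensional bound
\begin{equation*}
\left\|\lambda_T(t)\int_0^t h(s)\,ds\right\|_{H^b_t}\lesssim T^{1-b+b'}\|h\|_{H^{b'}_t}+T^{1/2-b}\|\langle\tau\rangle^{-1}\widehat{h}\|_{L^1_\tau},
\end{equation*}
and then square and integrate in $\xi$ with the weight $\langle\xi\rangle^{2s}$, using Minkowski's inequality to handle the $L^2_\xi L^1_\tau$ structure of $Y^s$.

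The heart of the argument is the representation
\begin{equation*}
\int_0^t h(s)\,ds=\int_{\R}\widehat{h}(\tau)\frac{e^{it\tau}-1}{i\tau}\,d\tau,
\end{equation*}
split dyadically at the scale $|\tau|\sim T^{-1}$. On the low-frequency region $\{|\tau|\leq T^{-1}\}$, the Taylor expansion writes $(e^{it\tau}-1)/(i\tau)$ as an absolutely convergent series of monomials $c_k t^{k+1}\tau^k$ with $|c_k|\leq 1/(k+1)!$; the time factor satisfies $\|\lambda_T(t)t^{k+1}\|_{H^b_t}\lesssim T^{k+3/2-b}$ by rescaling, while the frequency moment $\int_{|\tau|\leq T^{-1}}\widehat{h}(\tau)\tau^k\,d\tau$ is controlled via Cauchy--Schwarz by $T^{-k-1/2+b'}\|h\|_{H^{b'}_t}$ (here $b'\leq 0$ is used to absorb $\langle\tau\rangle^{-b'}$ on $|\tau|\leq T^{-1}$). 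The combined bound is $T^{1-b+b'}\|h\|_{H^{b'}_t}$, and the $k$-sum converges geometrically thanks to the factorial. On the high-frequency region $\{|\tau|>T^{-1}\}$ we split $(e^{it\tau}-1)/(i\tau)=e^{it\tau}/(i\tau)-1/(i\tau)$. The first piece yields $\lambda_T(t)\cdot\mathcal{F}^{-1}[\chi_{|\tau|>T^{-1}}\widehat{h}/(i\tau)]$; since $|\tau|^{-1}\langle\tau\rangle^{b-b'}\lesssim T^{1-b+b'}$ on this region whenever $b-b'\leq 1$, its $H^b_t$-norm is bounded by $T^{1-b+b'}\|h\|_{H^{b'}_t}$, and multiplication by $\lambda_T$ remains uniformly bounded on high-frequency inputs (using the support-based bound $\|f\|_{L^2}\lesssim T\|f\|_{\dot H^1}$ to cancel the $\|\partial_t\lambda_T\|_{L^\infty}\sim T^{-1}$ loss in the fractional Leibniz rule). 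The second piece is a constant in $t$ times $\lambda_T$; its coefficient $|\int_{|\tau|>T^{-1}}\widehat{h}(\tau)/(i\tau)\,d\tau|$ is bounded by $\|\langle\tau\rangle^{-1}\widehat{h}\|_{L^1_\tau}$ since $|\tau|^{-1}\chi_{|\tau|>T^{-1}}\lesssim\langle\tau\rangle^{-1}$, and $\|\lambda_T\|_{H^b_t}\lesssim T^{1/2-b}$, giving precisely the $Y^s$ term.

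The main obstacle will be the careful bookkeeping of $T$-powers through the various dyadic splittings, especially near the endpoint $b=b'+1$ where the Taylor series only converges because the low-frequency support enforces $|t\tau|\lesssim 1$. For part~(ii), when $b'>-1/2$, the constant-in-$t$ piece of the high-frequency contribution can instead be absorbed directly into the $X^{s,b'}$ estimate via
\begin{equation*}
\left|\int_{|\tau|>T^{-1}}\frac{\widehat{h}(\tau)}{i\tau}\,d\tau\right|\leq\|h\|_{H^{b'}_t}\left\|\frac{\chi_{|\tau|>T^{-1}}}{|\tau|\langle\tau\rangle^{b'}}\right\|_{L^2_\tau}\lesssim T^{1/2+b'}\|h\|_{H^{b'}_t},
\end{equation*}
where the $L^2$ norm is finite precisely because $b'>-1/2$; combined with $\|\lambda_T\|_{H^b_t}\lesssim T^{1/2-b}$ this reproduces $T^{1-b+b'}\|h\|_{H^{b'}_t}$, so the $Y^s$ correction becomes superfluous. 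Losing a single power of $T$ in any of these steps would break the contraction argument in the later nonlinear estimates, so the verification of all $T$-exponents is where I would spend the most care.
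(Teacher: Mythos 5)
Your proof is correct and reproduces, step by step, the argument in Ginibre--Tsutsumi--Velo \cite{Ginibre1997} (their Lemma 2.1), which is exactly what the paper cites; the paper itself gives no independent proof of this lemma. The only place that deserves a bit more care in writing it up is the uniform boundedness of multiplication by $\lambda_T$ on the high-frequency piece: the cleanest version is to show $\|\lambda_T F\|_{H^b}\lesssim\|F\|_{H^b}$ for $\widehat{F}$ supported on $|\tau|>T^{-1}$ by interpolating the trivial $L^2$ bound with the $H^1$ bound $\|\lambda_T F\|_{H^1}\lesssim\|F\|_{H^1}+T^{-1}\|F\|_{L^2}$ and then using $\|F\|_{L^2}\lesssim T\,\|F\|_{\dot H^1}$ from the Fourier support, which is precisely the mechanism you indicate.
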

The last step in our argument is the embedding of \(X^{s,b}\) into \(C(\R, H^s(\R^d))\),	for  $b>1/2$ due to the Sobolev's embedding theorem, it is clear that $X^{s,b} \subset C(\R, H^s(\R^d))$. However, this is no longer true if $b \leq 1/2$ and the following result is needed.
	\begin{lemma} 
		\label{C(R,H^s)}
		Let $ \textbf{q} \in Y^s$, then $\int_0^t ds \, \textbf{U}(t-s) \textbf{q}(s) \in C(\R, H^s(\R^d))$.
	\end{lemma}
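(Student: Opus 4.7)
The plan is to derive an explicit Fourier-side formula for the Duhamel integral $v(t) := \int_0^t \textbf{U}(t-s)\textbf{q}(s)\, ds$ and then exploit the cancellation on the resonance set $\tau + \textbf{p}(\xi) = 0$. First I would take the space Fourier transform, insert the time-Fourier representation of $\textbf{q}$, and swap the order of integration (justified a posteriori by the bound obtained below); the inner $s$-integral evaluates explicitly via
\[
\int_0^t e^{-i(t-s)\textbf{p}(\xi)} e^{is\tau}\, ds = \frac{e^{it\tau} - e^{-it\textbf{p}(\xi)}}{i(\tau + \textbf{p}(\xi))},
\]
giving, up to a Fourier-convention constant,
\[
\mathcal{F}_x v(\xi, t) = c \int_{\R} \frac{e^{it\tau} - e^{-it\textbf{p}(\xi)}}{\tau + \textbf{p}(\xi)}\, \widehat{\textbf{q}}(\xi, \tau)\, d\tau.
\]
Although the factor $1/(\tau + \textbf{p}(\xi))$ looks singular at the resonance set, the numerator vanishes there to first order and the integrand is in fact smooth in $\tau$.

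The crux is then the elementary inequality
\[
\left| \frac{e^{it\tau} - e^{-it\textbf{p}(\xi)}}{\tau + \textbf{p}(\xi)} \right| = \left| \frac{e^{it(\tau+\textbf{p}(\xi))} - 1}{\tau + \textbf{p}(\xi)} \right| \leq \min\left(|t|,\, \frac{2}{|\tau+\textbf{p}(\xi)|}\right) \leq \frac{C(1+|t|)}{\bra{\tau + \textbf{p}(\xi)}},
\]
obtained by splitting into the regimes $|\tau+\textbf{p}(\xi)| \leq 1$ and $|\tau+\textbf{p}(\xi)| > 1$. Inserting this estimate into the formula above and taking the $\norm{\bra{\xi}^s \cdot}_{L^2_\xi}$ norm, with Minkowski in $\tau$, yields the locally uniform bound
\[
\norm{v(t)}_{H^s} \leq C(1+|t|)\, \norm{\textbf{q}}_{Y^s},
\]
so in particular $v(t) \in H^s(\R^d)$ for every $t \in \R$.

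To upgrade to continuity I would apply dominated convergence twice. For any $t_n \to t$ contained in a bounded interval $[-T,T]$, the integrand of $\mathcal{F}_x v(\xi, t_n)$ is continuous in $t$ and is majorized uniformly in $n$ by $C(1+T)|\widehat{\textbf{q}}(\xi,\tau)|/\bra{\tau+\textbf{p}(\xi)}$, which is $L^1_\tau$-integrable for $L^2_\xi$-a.e.\ $\xi$ by hypothesis. This gives $\mathcal{F}_x v(\xi,t_n) \to \mathcal{F}_x v(\xi,t)$ pointwise in $\xi$; a second application in $\xi$ with the $L^2_\xi$-majorant supplied by $\textbf{q} \in Y^s$ upgrades this to $v(t_n) \to v(t)$ in $H^s$. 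The main obstacle is really the cancellation step: the naive split of the kernel into $e^{it\tau}/(\tau+\textbf{p}(\xi))$ and $e^{-it\textbf{p}(\xi)}/(\tau+\textbf{p}(\xi))$ would introduce non-integrable singularities at the resonance set, and the weight $\bra{\tau+\textbf{p}(\xi)}^{-1}$ built into $Y^s$ is only strong enough once the cancellation is exploited to convert $|\tau+\textbf{p}(\xi)|^{-1}$ into $\bra{\tau+\textbf{p}(\xi)}^{-1}$ uniformly on bounded $t$-intervals. With that trick in hand, both the pointwise bound and the continuity follow routinely by dominated convergence.
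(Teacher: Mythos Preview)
Your argument is correct and is essentially the standard proof from \cite{Ginibre1997}, which is exactly what the paper defers to (the paper does not supply its own proof of this lemma but refers to that reference). The key cancellation you exploit, rewriting the Duhamel kernel as $(e^{it(\tau+\textbf{p}(\xi))}-1)/(\tau+\textbf{p}(\xi))$ and bounding it by $C(1+|t|)\bra{\tau+\textbf{p}(\xi)}^{-1}$, is precisely the mechanism that makes the $Y^s$-norm the right object, and your two-step dominated convergence for continuity is the standard way to finish.
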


We now setup our problem \eqref{plus minus system} in the framework of \eqref{abstract cutoff}-\eqref{abstract cutoff 2}.\\
	Let  $U(t)=e^{it  \Delta}$ and $V_{\pm}(t)= e^{\mp i \omega t}$  be the unitary groups define the free evolution of \eqref{plus minus system}.\\
	Using  the cut-off functions are $\lambda(t)$ and $\lambda_T(t)$, we can rewrite \eqref{plus minus system} as follows
	\begin{equation}
		\label{cutoff psi}
		\psi_t = \lambda(t) U(t) \psi_0 - \frac{i}{2} \lambda_T(t) \int_0^t U(t-s) F(s) ds,
	\end{equation}
	
	\[
	F=F(\psi,\rho_{\pm}, \varphi_{\pm})=  \sigma_2 |\psi|^2\psi + \frac{ W}{2} (\rho_+ + \rho_-) \psi + \frac{  WD}{2}  (\varphi_+ + \varphi_-) \psi.
	\]
	\vspace{1mm}
	\begin{equation}
		\label{cutoff rho}
		\rho_{\pm} = \lambda(t) V_{\pm}(t) \rho_{\pm 0} \mp i \lambda_T(t) \int_0^t V_{\pm} (t-s) G(s) ds,
	\end{equation}
	
	\[
	G= G(\psi) = \pm \omega^{-1} \Delta (|\psi|^2) \pm  D \omega^{-1} (|\psi|^2)_{xt} \mp \omega^{-1} \rho_{\pm} .
	\]
	\vspace{1mm}
	\begin{equation}
		\label{cutoff phi}
		\varphi_{\pm}(t) = \lambda(t) V_{\pm}(t) \varphi_{\pm 0} \mp i \lambda_T(t) \int_0^t V_{\pm} (t-s) H(s) ds, 
	\end{equation}
	
	\[
	H=H(\psi) = \mp  D \omega^{-1}  (|\psi|^2)_{xx} \pm \omega^{-1} (|\psi|^2)_{t} \mp \omega^{-1} \phi_{\pm} .
	\]
	
	\vspace{1mm}
	Let  $\textbf{p}_1(\xi)=|\xi|^2, \, \textbf{p}_2(\xi)=\pm |\xi|$, we have the following Bourgain's spaces associated to $\textbf{p}_1, \textbf{p}_2$ respectively
	\begin{equation*}
			\norm{u}_{X_1^{k,b}} = \norm{ \bra{\xi}^k \bra{\tau + |\xi|^2}^b \widehat{u}(\xi,\tau)}_{L^2_{\xi,\tau}}
	\end{equation*}
	And
	\begin{align*}
		&	\norm{u}_{X_2^{k,b}} = \norm{ \bra{\xi}^k \bra{\tau \pm |\xi|}^b \widehat{u}(\xi,\tau)}_{L^2_{\xi,\tau}}, \\
		&	\norm{u}_{Y_2^k} = \norm{\bra{\xi}^k \bra{\tau \pm |\xi|}^{-1} \widehat{u}(\xi, \tau)}_{L^2_\xi(L^1_\tau)}.
	\end{align*}
	\vspace{1mm}
	We shall solve the integral equations \eqref{cutoff psi}-\eqref{cutoff phi} by a fixed point theorem with 
	\[
	\psi \text{ in } X_1^{1,b_1} ,
	\]
	\[
	\rho_{\pm} \text{ and }  \varphi_{\pm}  \text{ in } X_2^{k_2,b_2},
	\]
	here \(k_2\) is actually \(l\) in the main Theorem \ref{theorem main}, we use a symbols with indexes to precise the latter nonlinear estimates.\\
	The other symbols \(b_1, b_2\) should satisfy some ``initial'' technical conditions as follows
	\begin{align*}
		& b_1>\frac{1}{2}, \\
		& b_2 = \frac{1}{2} - \frac{k_2}{2},\\
		&  0 \leq k_2\leq 1,\\
		& c_1 + b_1 =1 \text{ and } c_2 +b_2 =1.
	\end{align*}
	The parameters \(c_1, c_2\) are defined as the parameter \(-b'\) in Lemma \ref{Lemma 2.1 in [1]}, hence they are positive.\\
	\begin{remark}
		\begin{itemize}
			\item [(i)] Firstly, we do not have parameter \(k_1\), indeed, \(k_1=1\) since we want to fix the Sobolev order of \(\psi\) as mentioned in the introduction. Although, our analysis should works in more general case of \(k_1\), we decide to fix it so that we can precise all the calculations. That actually helps if one want to deal with more challenge problem with the model parameter \(\epsilon\) involved.
			\item [(ii)] Secondly, it is worth noticing the importance of \(k_2\) or \(b_2\), so \(b_1\) will be chosen flexibly. More precisely, in our analysis, we choose \(b_2\) so that \(b_1\) can be taken satisfying the above conditions. The final conditions on \(b_1, b_2\) will be summarized in the last step of proof of \ref{theorem main} when we obtain all necessary information from the nonlinear estimates.
		\end{itemize}
	\end{remark}

	We next present all the necessary estimates following the aforementioned argument then we use the self-duality of \(L^2\) space to rewrite those estimates into integral form.\\
	Indeed, using Lemma \ref{Lemma 2.1 in [1]} leads to the following estimates:
	
	For \eqref{cutoff psi}:
	\begin{align}
		\label{nl psi 1}
		&		\norm{|\psi|^2\psi}_{X_1^{1,-c_1}} \lesssim  T^{\theta_1} \norm{\psi}_{X_1^{1,b_1}}^3, \\
		\label{nl psi 2}
		&		\norm{\rho_{\pm} \psi}_{X_1^{1,-c_1}} \lesssim  T^{\theta_2} \norm{\rho_{\pm}}_{X_2^{k_2,b_2}} \norm{\psi}_{X_1^{1,b_1}}, \\
		\label{nl psi 3}
		&		\norm{\varphi_{\pm} \psi}_{X_1^{1,-c_1}} \lesssim  T^{\theta_3} \norm{\varphi_{\pm}}_{X_2^{k_2,b_2}}  \norm{\psi}_{X_1^{1,b_1}}.
	\end{align}
	For \eqref{cutoff rho}:
	\begin{align}
		\label{nl rho 1}
		&		\norm{\omega^{-1} \Delta (|\psi|^2)}_{X_2^{k_2, -c_2}} \lesssim  T^{\theta_4} \norm{\psi}_{X_1^{1,b_1}}^2, \\
		\label{nl rho 2}
		&		\norm{\omega^{-1} (|\psi|^2)_{xt}}_{X_2^{k_2, -c_2}} \lesssim  T^{\theta_5} \norm{\psi}_{X_1^{1,b_1}}^2, \\
		\label{nl rho 3}
		&		\norm{\omega^{-1} \Delta (|\psi|^2)}_{Y_2^{k_2}} \lesssim  T^{\theta_6} \norm{\psi}_{X_1^{1,b_1}}^2, \\
		\label{nl rho 4}
		&		\norm{\omega^{-1} (|\psi|^2)_{xt}}_{Y_2^{k_2}} \lesssim  T^{\theta_{7}} \norm{\psi}_{X_1^{1,b_1}}^2
	\end{align}
	For \eqref{cutoff phi}:
	\begin{align}
		\label{nl phi 1}
		&	\norm{\omega^{-1} (|\psi|^2)_{xx}}_{X_2^{k_2, -c_2}} \lesssim  T^{\theta_{8}} \norm{\psi}_{X_1^{1,b_1}}^2, \\
		\label{nl phi 2}
		&	\norm{\omega^{-1} (|\psi|^2)_{xt}}_{X_2^{k_2, -c_2}} \lesssim  T^{\theta_{9}} \norm{\psi}_{X_1^{1,b_1}}^2, \\
		\label{nl phi 3}
		&	\norm{\omega^{-1} (|\psi|^2)_{xx}}_{Y_2^{k_2}} \lesssim  T^{\theta_{10}} \norm{\psi}_{X_1^{1,b_1}}^2, \\
		\label{nl phi 4}
		&	\norm{\omega^{-1} (|\psi|^2)_{xt}}_{Y_2^{k_2}} \lesssim  T^{\theta_{11}} \norm{\psi}_{X_1^{1,b_1}}^2.
	\end{align}
	Note that for \eqref{cutoff rho} and \eqref{cutoff phi} we need to estimate the \(Y_2^{k_2}\) norm because we are forced to choose \(b_2<\frac{1}{2}\), then the Lemma \ref{lemma ineq} is required.\\
	By the self-duality of $L^2$, it is more convenient to represent $\psi, \rho_{\pm}$ and $\varphi_{\pm}$ in the form
	\[
	\begin{split}
		& \widehat{\psi}(\xi,\tau) = \bra{\xi}^{-1} \bra{\tau +  |\xi|^2}^{-b_1} \widehat{w}(\xi,\tau), \\
		& \widehat{\overline{\psi}} (\xi, \tau)= \bra{\xi}^{-1} \bra{\tau - |\xi|^2}^{-b_1} \widehat{\overline{w}}(\xi, \tau), \\
		& \widehat{\rho_{\pm}} (\xi,\tau) = \bra{\xi}^{-k_2} \bra{\tau \pm |\xi|}^{-b_2} \widehat{u}(\xi,\tau), \\
		& \widehat{\varphi_{\pm}} (\xi,\tau) = \bra{\xi}^{-k_2} \bra{\tau \pm |\xi|}^{-b_2} \widehat{v}(\xi,\tau).
	\end{split}
	\]
	In order to estimate \eqref{nl psi 1}, we multiply $|\psi|^2\psi$ with a function in the dual space $X_1^{-1,c_1}$ which has the form $\bra{\xi}\bra{\tau + |\xi|^2}^{-c_1} \widehat{v_1}(\xi, \tau)$ where $v_1 \in L^2_{x,t}$.  This argument can be used for \eqref{nl psi 2}-\eqref{nl rho 2} and \eqref{nl phi 1}-\eqref{nl phi 2}.
	
	Similarly, to estimate $\norm{f}_{Y_2^k}$, we divide $|\widehat{f}|$ by $\bra{\tau \pm |\xi|}$ respectively,  integrate over $\tau$ for fixed $\xi$ and then take the scalar product with a generic function in $H_x^{-k}$ with Fourier transform $\bra{\xi}^k \widehat{v_3}$ and $v_3 \in L_x^2$. Using this scheme we can estimate  \eqref{nl rho 3}-\eqref{nl rho 4}  and \eqref{nl phi 3}-\eqref{nl phi 4}. 
	
	Those arguments lead to the following integrals.
	
	\noindent \textbf{Estimate} \eqref{nl psi 1}:
	\[
	\begin{split}
		I_1 & = \int \widehat{\psi^2 \overline{\psi}}(\xi, \tau) \bra{\xi}\bra{\tau +  |\xi|^2}^{-c_1} \widehat{v_1}(\xi, \tau) \, d \xi d \tau \\
		& = \int \widehat{\psi^2}(\xi_1, \tau_1) \widehat{\overline{\psi}}(\xi - \xi_1, \tau - \tau_1) \bra{\xi}\bra{\tau +  |\xi|^2}^{-c_1} \widehat{v_1}(\xi, \tau) \, d \xi d \tau d \xi_1 d \tau_1 \\
		& = \int \widehat{\psi}(\xi_2, \tau_2) \widehat{\psi}(\xi_1-\xi_2, \tau_1- \tau_2)\widehat{\overline{\psi}}(\xi - \xi_1, \tau - \tau_1) \bra{\xi}\bra{\tau +  |\xi|^2}^{-c_1} \widehat{v_1}(\xi, \tau) \\
		& \qquad \,  d \xi d \tau d \xi_1 d \tau_1 d \xi_2 d \tau_2 \\
		& = \int \frac{ \bra{\xi} \widehat{w}(\xi_2, \tau_2) \widehat{w}(\xi_1-\xi_2, \tau_1- \tau_2) \widehat{\overline{w}}(\xi - \xi_1, \tau - \tau_1) \widehat{v_1}(\xi,\tau)  }{\bra{\xi_2}\bra{\xi_1-\xi_2}\bra{\xi - \xi_1} \bra{\tau_2+  |\xi_2|^2}^{b_1}    } \\
		& \qquad \frac{\, d \xi d \tau d \xi_1 d \tau_1 d \xi_2 d \tau_2  }{\bra{\tau_1-\tau_2+ |\xi_1-\xi_2|^2}^{b_1} \bra{\tau - \tau_1- |\xi - \xi_1|^2}^{b_1} \bra{\tau +  |\xi|^2}^{c_1}}.
	\end{split}
	\]
	
	For the clear presentation, we will omit the arguments of functions on the numerator of integral and also the notation of variables. Then, 
	\[
	\begin{split}
		I_1 & = \int \frac{\bra{\xi} \widehat{w} \widehat{w} \widehat{\overline{w}} \widehat{v_1}}{\bra{\xi_2}\bra{\xi_1-\xi_2}\bra{\xi - \xi_1} \bra{\tau_2+  |\xi_2|^2}^{b_1} \bra{\tau_1-\tau_2+  |\xi_1-\xi_2|^2}^{b_1} } \\
		& \qquad \frac{1}{\bra{\tau - \tau_1-  |\xi - \xi_1|^2}^{b_1} \bra{\tau +  |\xi|^2}^{c_1}},
	\end{split}
	\]
	and \eqref{nl psi 1} is equivalent to
	\begin{equation}
		\label{nl psi 1_2}
		|I_1| \lesssim  T^{\theta_1} \norm{w}_2^3\norm{v_1}_2.
	\end{equation}
	\noindent Doing similarly, we can rewrite \eqref{nl psi 2}-\eqref{nl phi 4} as follows
	
	\noindent \textbf{Estimate} \eqref{nl psi 2}:
	\begin{equation}
		\label{nl psi 2_2}
		|I_2| \lesssim  T^{\theta_2} \norm{u}_2 \norm{w}_2 \norm{v_1}_2,
	\end{equation}
	with
	\[
	I_2= \int \frac{ \bra{\xi} \widehat{u}\widehat{w} \widehat{v_1} }{ \bra{\xi_1}^{k_2} \bra{\xi - \xi_1} \bra{\tau_1 \pm |\xi_1|}^{b_2}  \bra{\tau - \tau_1 +  |\xi - \xi_1|^2}^{b_1} \bra{\tau +  |\xi|^2}^{c_1} }.
	\]
	
	\noindent \textbf{Estimate} \eqref{nl psi 3}:
	\begin{equation}
		\label{nl psi 3_2}
		|I_3| \lesssim  T^{\theta_3} \norm{v}_2 \norm{w}_2 \norm{v_1}_2,
	\end{equation}
	with
	\[
	I_3= \int \frac{  \bra{\xi} \widehat{v}\widehat{w} \widehat{v_1} }{ \bra{\xi_1}^{k_2} \bra{\xi - \xi_1} \bra{\tau_1 \pm |\xi_1|}^{b_2} \bra{\tau - \tau_1 +  |\xi - \xi_1|^2}^{b_1} \bra{\tau + |\xi|^2}^{c_1} }.
	\]

	\noindent \textbf{Estimate} \eqref{nl rho 1}:
	\begin{equation}
		\label{nl rho 1_2}
		|I_4| \lesssim  T^{\theta_4} \norm{w}_2^2 \norm{v_2}_2,
	\end{equation}
	with
	\[
	I_4 = \int \frac{|\xi|\bra{\xi}^{k_2}\widehat{w} \widehat{\overline{w}} \widehat{v_2} }{ \bra{\xi_1} \bra{\xi -\xi_1} \bra{\tau- \tau_1 +   |\xi - \xi_1|^2}^{b_1} \bra{\tau_1 -  |\xi_1|^2}^{b_1} \bra{\tau \pm |\xi|}^{c_2}}.
	\]
	
	\noindent Estimate \eqref{nl rho 2}:
	\begin{equation}
		\label{nl rho 2_2}
		|I_5| \lesssim  T^{\theta_5} \norm{w}_2^2 \norm{v_2}_2,
	\end{equation}
	with
	\[
	I_5 = \int \frac{\xi^{(1)} \tau  \bra{\xi}^{k_2}\widehat{w} \widehat{\overline{w}} \widehat{v_2} }{|\xi| \bra{\xi_1} \bra{\xi -\xi_1} \bra{\tau- \tau_1 +  |\xi - \xi_1|^2}^{b_1} \bra{\tau_1 - |\xi_1|^2}^{b_1} \bra{\tau \pm |\xi|}^{c_2}}.
	\]

	Estimate \eqref{nl rho 3}:
	\begin{equation}
		\label{nl rho 3_2}
		|I_6| \lesssim  T^{\theta_6} \norm{w}_2^2 \norm{v_3}_2
	\end{equation}
	with
	\[
	I_6 = \int \frac{ |\xi| \bra{\xi}^{k_2} \widehat{w}\widehat{\overline{w}} \widehat{v_3}  }{ \bra{\xi_1} \bra{\xi- \xi_1} \bra{\tau - \tau_1 +  |\xi - \xi_1|^2}^{b_1} \bra{\tau_1 -  |\xi_1|^2}^{b_1} \bra{\tau \pm |\xi|}  }.
	\]
	
	\noindent Estimate \eqref{nl rho 4}:
	\begin{equation}
		\label{nl rho 4_2}
		|I_{7}| \lesssim  T^{\theta_{7}} \norm{w}_2^2 \norm{v_3}_2,
	\end{equation}
	with
	\[
	I_{7} = \int \frac{ \xi^{(1)} \tau \bra{\xi}^{k_2} \widehat{w}\widehat{\overline{w}} \widehat{v_3}  }{ |\xi| \bra{\xi_1} \bra{\xi- \xi_1} \bra{\tau - \tau_1 +  |\xi - \xi_1|^2}^{b_1} \bra{\tau_1 -  |\xi_1|^2}^{b_1} \bra{\tau \pm |\xi|}  },
	\]
	here, \(\xi^{(1)}\) denotes the first component of vector \(\xi\) in \(\R^2\).
	
	\noindent Estimate \eqref{nl phi 1}:
	\begin{equation}
		\label{nl phi 1_2}
		|I_{8}| \lesssim  T^{\theta_{8}} \norm{w}_2^2 \norm{v_2}_2,
	\end{equation}
	with
	\[
	I_{8}= \int \frac{(\xi^{(1)})^2 \bra{\xi}^{k_2}\widehat{w} \widehat{\overline{w}} \widehat{v_2} }{|\xi| \bra{\xi_1} \bra{\xi -\xi_1} \bra{\tau- \tau_1 +  |\xi - \xi_1|^2}^{b_1} \bra{\tau_1 -  |\xi_1|^2}^{b_1} \bra{\tau \pm |\xi|}^{c_2}}.
	\]
	\noindent Estimate \eqref{nl phi 2}
	\begin{equation}
		\label{nl phi 2_2}
		|I_{9}| \lesssim  T^{\theta_{9}} \norm{w}_2^2 \norm{v_2}_2,
	\end{equation}
	with 
	\[
	I_{9}= \int \frac{ \xi^{(1)}  \tau \bra{\xi}^{k_2}\widehat{w} \widehat{\overline{w}} \widehat{v_2}  }{ |\xi| \bra{\xi_1} \bra{\xi -\xi_1} \bra{\tau- \tau_1 +  |\xi - \xi_1|^2}^{b_1} \bra{\tau_1 -  |\xi_1|^2}^{b_1} \bra{\tau \pm |\xi|}^{c_2}}.
	\]
	\noindent Estimate \eqref{nl phi 3}:
	\begin{equation}
		\label{nl phi 3_2}
		|I_{10}| \lesssim  T^{\theta_{10}} \norm{w}_2^2 \norm{v_3}_2,
	\end{equation}
	with
	\[
	I_{10} = \int \frac{ (\xi^{(1)})^2 \bra{\xi}^{k_2} \widehat{w}\widehat{\overline{w}} \widehat{v_3}  }{ |\xi| \bra{\xi_1} \bra{\xi- \xi_1} \bra{\tau - \tau_1 +  |\xi - \xi_1|^2}^{b_1} \bra{\tau_1 -  |\xi_1|^2}^{b_1} \bra{\tau \pm |\xi|}  }.
	\]
	\noindent Estimate \eqref{nl phi 4}:
	\begin{equation}
		\label{nl phi 4_2}
		|I_{11}| \lesssim  T^{\theta_{11}} \norm{w}_2^2 \norm{v_3}_2,
	\end{equation}
	with 
	\[
	I_{11} = \int \frac{ \xi^{(1)} \tau \bra{\xi}^{k_2} \widehat{w}\widehat{\overline{w}} \widehat{v_3}  }{ |\xi| \bra{\xi_1} \bra{\xi- \xi_1} \bra{\tau - \tau_1 +  |\xi - \xi_1|^2}^{b_1} \bra{\tau_1 -  |\xi_1|^2}^{b_1} \bra{\tau \pm |\xi|}  }.
	\]
	\section{Preliminary estimates} \label{Section preliminary estimate}
		In this section, to prepare for the proofs of \eqref{nl psi 1_2}-\eqref{nl phi 4_2}, we recall the Strichartz estimates and some elementary inequalities.
	\begin{lemma} \label{lemma ineq schr} (Strichartz estimate, \cite{Ginibre1997})\\
		 Let $b_0 > 1/2$, let $a \geq 0$, $a' \geq 0$, let $0 \leq \gamma \leq 1$. Assume in addition that $(1 - \gamma) a \leq b_0$ and $\gamma a \leq a '$. Let $0 < \eta \leq 1$ and define $q$ and $r$ by
		\begin{gather}
			\label{strichartz 1}   \frac{2}{q} = 1 - \frac{\eta(1 - \gamma) a}{b_0} \\
			\label{strichartz 2} \delta(r)=\frac{d}{2} - \frac{d}{r} = \frac{(1- \eta) (1 -\gamma) a }{b_0}.
		\end{gather}
		Let $v \in L^2$ be such that $\mathcal{F}^{-1} (\bra{\tau + |\xi|^2}^{-a'} \widehat{v})$ has support in $|t| \leq CT$. Then
		\begin{equation}
			\label{strichartz 3} \norm{\mathcal{F}^{-1} (\bra{\tau + |\xi|^2 }^{-a} |\widehat{v}|)}_{L_t^q L_x^r} \leq C T^\theta \norm{v}_2,
		\end{equation}
		\begin{equation}
			\label{lemma ineq defi theta}
			\theta = \gamma a \left(   1- [a'-1/2]_+/a' \right)
		\end{equation}
		We recall that 
		\begin{equation*}
			[\lambda]_+ =  \left \{ \begin{split}
				& \lambda \text{ if } \lambda>0, \\
				& \varepsilon>0 \text{ if } \lambda=0,\\
				& 0 \text{ if } \lambda<0.
			\end{split}
			\right.
		\end{equation*}
		
		For the wave equation, i.e. $\sigma = \tau \pm |\xi|$, we only consider the special cases of \eqref{strichartz 3} when $\eta=1$ and $r=2$. So, $q$ is defined by
		\begin{equation}
			\label{lemma ineq wave defi q}
			\frac{2}{q}= 1 - (1-\gamma) \frac{a}{b_0}.
		\end{equation}
		Let $v\in L^2$ be such that $\mathcal{F}^{-1} (\bra{\sigma}^{-a'} |\widehat{v}|)$ has support in $|t| \leq CT$. Then
		\begin{equation}
			\label{lemma ineq wave 4}
			\norm{\mathcal{F}^{-1} ( \bra{\sigma}^{-a} |\widehat{v}|)}_{L^q_tL^2_x} \leq CT^{\theta} \norm{v}_2
		\end{equation}
		with $\theta \geq 0$. Note that $\theta = 0$ if and only if $a=0$ or $\gamma =0$.
	\end{lemma}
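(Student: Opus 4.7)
The plan is to deduce \eqref{strichartz 3} by combining the classical Strichartz estimate for the free Schr\"odinger group with the Bourgain transfer principle and an $X^{s,b}$ time-localization lemma. First I would recall the Strichartz inequality: for every Schr\"odinger-admissible pair $(q_0, r_0)$ (i.e.\ $2/q_0 + d/r_0 = d/2$, $q_0 \geq 2$, $(q_0, r_0, d) \neq (2, \infty, 2)$), one has $\norm{U(t) f}_{L^{q_0}_t L^{r_0}_x} \lesssim \norm{f}_2$. Writing a generic $u$ as $u(t) = \int e^{it\tau} U(t) g(\cdot, \tau)\, d\tau$ with $g(\xi, \tau) = \widehat{U(-\cdot) u}(\xi, \tau)$, then applying Minkowski in $\tau$ followed by Cauchy--Schwarz against the weight $\bra{\tau}^{-b_0}$ (which lies in $L^2_\tau$ precisely because $b_0 > 1/2$), yields the transferred embedding $X^{0, b_0} \hookrightarrow L^{q_0}_t L^{r_0}_x$ associated to the multiplier $\textbf{p}(\xi) = |\xi|^2$.

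Real interpolation of this embedding with the trivial identity $X^{0,0} = L^2_{t,x}$ produces a two-parameter family $X^{0, (1-\gamma) a} \hookrightarrow L^q_t L^r_x$, in which $\eta$ selects the intermediate admissible pair on the Strichartz line and $(1-\gamma) a$ is the interpolated modulation exponent; a direct computation identifies the resulting $q$ and $r$ with \eqref{strichartz 1}--\eqref{strichartz 2}. Applied to $F := \mathcal{F}^{-1}(\bra{\tau + |\xi|^2}^{-a} |\widehat v|)$, which satisfies $\norm{F}_{X^{0, a}} = \norm{v}_2$ and thus $\norm{F}_{X^{0, (1-\gamma) a}} \leq \norm{v}_2$ since $(1-\gamma) a \leq a$, this delivers the preliminary bound $\norm{F}_{L^q_t L^r_x} \lesssim \norm{v}_2$ without any $T$-gain.

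The factor $T^\theta$ is then extracted from the time-support hypothesis on $\tilde v := \mathcal{F}^{-1}(\bra{\tau + |\xi|^2}^{-a'} |\widehat v|)$. The algebraic identity $\norm{F}_{X^{0, (1-\gamma) a}} = \norm{\tilde v}_{X^{0, a' - \gamma a}}$ combined with $\norm{\tilde v}_{X^{0, a'}} = \norm{v}_2$ reduces the claim to the standard $X^{s,b}$ time-localization estimate
\[
\norm{\tilde v}_{X^{0, b}} \lesssim T^{\min(b_*, 1/2) - \min(b, 1/2)} \norm{\tilde v}_{X^{0, b_*}}, \qquad 0 \leq b \leq b_*,
\]
valid when $\tilde v$ is time-supported in $|t| \leq CT$, and proved by multiplying against a smooth time cutoff and redistributing the modulation weight via H\"older in the Fourier-in-time variable. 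Taking $b = a' - \gamma a$ and $b_* = a'$ yields $T^{\gamma a}$ when $a' \leq 1/2$ and $T^{\gamma a / (2 a')} = T^{\gamma a (1 - (a' - 1/2)/a')}$ when $a' > 1/2$, exactly matching \eqref{lemma ineq defi theta}. The wave version \eqref{lemma ineq wave 4} follows by the same scheme with the Schr\"odinger Strichartz replaced by the trivial isometry $\norm{V_\pm(t) f}_{L^\infty_t L^2_x} = \norm{f}_2$, which is why only $\eta = 1$, $r = 2$ appears.

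The main technical obstacle is pinning down the exact exponent $\theta$: the dichotomy $a' \leq 1/2$ versus $a' > 1/2$ in \eqref{lemma ineq defi theta} reflects that a smooth time cutoff of scale $T$ lies only in $H^{1/2-}_t(\R)$, so it can transfer modulation weight freely below the threshold $1/2$ but not above it. Consequently, the $T$-gain saturates at $\min(b_*, 1/2)$ rather than at the full $b_*$, and this saturation must be carefully threaded through the interpolation step in order to obtain \eqref{lemma ineq defi theta} on the nose.
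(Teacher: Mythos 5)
The paper does not give a proof of this lemma; it is cited from Ginibre--Tsutsumi--Velo, and your high-level strategy (transfer principle, interpolation against $L^2_{t,x}$, then time-localization on the $\tilde v$ side) is indeed the GTV route, and the algebraic reduction $\norm{F}_{X^{0,(1-\gamma)a}} = \norm{\tilde v}_{X^{0,a'-\gamma a}}$, $\norm{\tilde v}_{X^{0,a'}} = \norm{v}_2$ is correct. However, there is a genuine error in the time-localization step. The estimate you invoke, $\norm{\tilde v}_{X^{0,b}} \lesssim T^{\min(b_*,1/2)-\min(b,1/2)}\norm{\tilde v}_{X^{0,b_*}}$, does \emph{not} produce the exponent \eqref{lemma ineq defi theta}. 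Plugging in $b=a'-\gamma a$, $b_*=a'$ with $a'>1/2$, your formula gives $\tfrac12 - \min(a'-\gamma a, \tfrac12)$, which equals $\tfrac12 - a' + \gamma a$ (or $0$), not $\gamma a/(2a')$; e.g.\ for $a'=1$, $\gamma a = 0.4$ your exponent is $0$, whereas $\theta = 0.2$. Your concluding sentence ``exactly matching'' is therefore a miscalculation, and the formula you quote is a strictly weaker localization lemma than what is needed.

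The correct localization estimate is multiplicative rather than additive in the cap: for $w$ time-supported in $|t|\le CT$ and $0\le b\le b_*$,
\begin{equation*}
	\norm{w}_{H^b_t L^2_x} \lesssim T^{(1-b/b_*)\min(b_*,1/2)}\,\norm{w}_{H^{b_*}_t L^2_x}.
\end{equation*}
This follows by combining two facts. First, the $b=0$ endpoint: using the time-Sobolev embedding $H^{b_*}_t\hookrightarrow L^{q}_t$ with $\tfrac1q = \tfrac12 - \min(b_*,1/2)$ (modulo $\varepsilon$-loss at $b_*=\tfrac12$) and H\"older against the cutoff $\chi_T\in L^{p}_t$ with $\tfrac1p=\min(b_*,1/2)$ gives $\norm{w}_{L^2}\lesssim T^{\min(b_*,1/2)}\norm{w}_{H^{b_*}}$. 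Second, the weighted-$L^2$ H\"older (or Stein) interpolation $\norm{w}_{H^b}\le\norm{w}_{L^2}^{1-b/b_*}\norm{w}_{H^{b_*}}^{b/b_*}$, obtained by writing $\bra{\tau}^{2b} = (\bra{\tau}^{2b_*})^{b/b_*}\cdot 1^{1-b/b_*}$ and applying H\"older in $\tau$. Composing the two yields the displayed estimate, and with $b=a'-\gamma a$, $b_*=a'$ the exponent becomes $\tfrac{\gamma a}{a'}\min(a',\tfrac12)=\gamma a\left(1-[a'-\tfrac12]_+/a'\right)$, i.e.\ \eqref{lemma ineq defi theta} on the nose. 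You should replace the additive cap $\min(b_*,1/2)-\min(b,1/2)$ by this interpolated exponent; the intuition about the cutoff lying only in $H^{1/2-}_t$ is the right heuristic for the $b=0$ endpoint, but the correct propagation to general $b$ is geometric (by interpolation) and not the naive difference of caps. A minor further imprecision: the hypothesis of the lemma concerns the time support of $\mathcal{F}^{-1}(\bra{\tau+|\xi|^2}^{-a'}\widehat v)$ without the modulus, while your $\tilde v$ carries $|\widehat v|$; since every quantity in the reduction is a weighted $L^2$ norm in $(\xi,\tau)$ this is harmless, but the localization lemma should be applied to the function that is actually time-supported.
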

	\begin{remark}
		Those estimates together with the cut-off procedure in \eqref{cutoff psi}-\eqref{cutoff phi} ensure the appearance of \(T\).
	\end{remark}
	

	\begin{lemma}(Symbolic inequalities) \label{lemma ineq}
		Let $\xi, \xi_1, \xi_2$ be vectors in $\R^d$ $(d=2,3)$ and $\tau, \tau_1 \in \R$ then we have the following inequalities.
		
		\noindent	i) For all $\xi, \xi_1,\xi_2$, we have
		\begin{equation}
			\label{ineq 1}
			\bra{\xi} \leq \bra{\xi_2} + \bra{\xi_1-\xi_2} + \bra{\xi - \xi_1}. 
		\end{equation}
		
		\vspace*{1mm}
		\noindent	ii) If  $|\xi|>2|\xi - \xi_1|$, then 
		\begin{equation}
			\label{ineq 2}
			 \bra{\xi}^2 \lesssim \bra{\tau_1 \pm |\xi_1|} + \bra{\tau-\tau_1 + |\xi - \xi_1|^2} + \bra{\tau + |\xi|^2}.
		\end{equation}
		
		\vspace*{1mm}
		\noindent	iii) For all $\xi, \xi_1, \xi_2$ we have
		\begin{equation}
			\label{ineq 3}
			\bra{\xi}^2 \lesssim   \bra{\tau - \tau_1 + |\xi - \xi_1|^2} + \bra{\tau_1 - |\xi_1|^2} + \bra{\tau \pm |\xi|} 
		\end{equation}
		holds.
		
		\vspace*{1mm}
		\noindent	iv) For all $\tau, \tau_1, \xi, \xi_1$, we have
		\begin{equation}
			\label{ineq 4}
			\bra{\xi} \bra{\tau \pm  |\xi|^2}^{1/2} \gtrsim |\tau|^{1/2},
		\end{equation}
		then, as a corollary
		\begin{equation}
			\label{ineq 5}
			\bra{\xi_1} \bra{\xi - \xi_1} \bra{\tau - \tau_1 + |\xi - \xi_1|^2}^{1/2} \bra{\tau_1 - |\xi_1|^2}^{1/2} \gtrsim |\tau|^{1/2}.
		\end{equation}
	\end{lemma}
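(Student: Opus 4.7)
My plan is to treat the four items (and the corollary \eqref{ineq 5}) independently, since each reduces to elementary algebra or a triangle inequality.

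For (i), writing $\xi = \xi_2 + (\xi_1 - \xi_2) + (\xi-\xi_1)$ and applying the triangle inequality yields $|\xi|\leq |\xi_2|+|\xi_1-\xi_2|+|\xi-\xi_1|$; combining with $\bra{\xi}\leq 1+|\xi|$ and absorbing the extra $1$ using $\bra{\xi_2}\geq 1$ gives the result.

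For (ii) and (iii) I will use the standard resonance identity for Bourgain-space bilinear estimates: form the signed linear combination of the three expressions on the right that cancels both free parameters $\tau$ and $\tau_1$, and then invoke $\max(|X|,|Y|,|Z|) \geq \frac{1}{3}|X \pm Y \pm Z|$ to transfer a lower bound from the combination to at least one bracket. In (ii) the combination $(\tau+|\xi|^2) - (\tau-\tau_1+|\xi-\xi_1|^2) - (\tau_1\pm|\xi_1|)$ equals $|\xi|^2 - |\xi-\xi_1|^2 \mp|\xi_1|$; the hypothesis $|\xi|>2|\xi-\xi_1|$ forces $|\xi_1|\sim|\xi|$ and $|\xi|^2 - |\xi-\xi_1|^2 \geq \frac{3}{4}|\xi|^2$, so for $|\xi|$ beyond a fixed threshold the linear term $\mp|\xi_1|$ is absorbed by the quadratic one and the combination is $\gtrsim |\xi|^2$; for $|\xi|$ below the threshold the inequality is trivial since $\bra{\xi}^2$ is bounded while each bracket on the right is at least $1$. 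For (iii) the analogous combination is $(\tau-\tau_1+|\xi-\xi_1|^2)+(\tau_1-|\xi_1|^2)-(\tau\pm|\xi|) = |\xi-\xi_1|^2 - |\xi_1|^2 \mp|\xi|$; I will case-split on the relative sizes of $|\xi-\xi_1|$ and $|\xi_1|$ and invoke the parallelogram identity $|\xi|^2 + |\xi-2\xi_1|^2 = 2(|\xi-\xi_1|^2+|\xi_1|^2)$ to reinstate the factor $|\xi|^2$ in the dominant regimes, handling small $|\xi|$ trivially as before.

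For (iv), the identity $\tau = (\tau\pm|\xi|^2)\mp|\xi|^2$ together with sub-additivity of $(\cdot)^{1/2}$ gives $|\tau|^{1/2}\leq\bra{\tau\pm|\xi|^2}^{1/2}+\bra{\xi}$, after which the elementary fact $a+b\leq 2ab$ for $a,b\geq 1$ delivers the bound. For the corollary \eqref{ineq 5} I apply (iv) separately to $(\xi-\xi_1,\tau-\tau_1)$ and $(\xi_1,\tau_1)$, pass from the resulting product to a sum via $ab\geq\frac{1}{2}(a+b)$ (again valid since each of the two factors is $\geq 1$), and close with the sub-additive bound $|\tau|^{1/2}\leq|\tau-\tau_1|^{1/2}+|\tau_1|^{1/2}$.

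The step I expect to demand the most care is (iii): in the near-resonant regime $\xi_1 \approx \xi/2$ the algebraic combination $|\xi-\xi_1|^2 - |\xi_1|^2 \mp|\xi|$ collapses to order $|\xi|$ rather than $|\xi|^2$, so the case split tied to the parallelogram identity must do all the work, and the intermediate regime where $|\xi_1|\sim|\xi-\xi_1|\sim|\xi|$ will require the most delicate bookkeeping to squeeze out the desired quadratic lower bound. The remaining items should go through as described with only routine arithmetic.
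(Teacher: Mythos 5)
Parts (i), (ii), (iv) and (v) of your proposal are essentially sound and line up with the paper's (terse) proof. One small slip in (i): the claim that the extra $1$ is absorbed using $\bra{\xi_2}\ge 1$ runs the wrong way, since $\bra{\xi_2}\le 1+|\xi_2|$. Instead verify the sub-additive bound $\bra{a+b}\le\bra{a}+\bra{b}$ directly: squaring, it reduces to $1+|a+b|^2\le 2+|a|^2+|b|^2+2\bra{a}\bra{b}$, which holds because $|a+b|^2\le|a|^2+2|a||b|+|b|^2$ and $\bra{a}\bra{b}\ge|a||b|$; iterating gives \eqref{ineq 1}. Your treatment of (ii) is exactly the paper's argument, and (iv), (v) go through as you describe.

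Part (iii) is where your proposal has a genuine gap, and the worry you flag at the end cannot be repaired by the parallelogram identity. The only linear combination of the three bracket arguments that cancels both $\tau$ and $\tau_1$ is
\[
(\tau-\tau_1+|\xi-\xi_1|^2)+(\tau_1-|\xi_1|^2)-(\tau\pm|\xi|)=|\xi-\xi_1|^2-|\xi_1|^2\mp|\xi|,
\]
a \emph{difference}, whereas the parallelogram identity $|\xi|^2+|\xi-2\xi_1|^2=2\left(|\xi-\xi_1|^2+|\xi_1|^2\right)$ controls the \emph{sum} $|\xi-\xi_1|^2+|\xi_1|^2$, which does not arise from any $(\tau,\tau_1)$-free combination of the brackets. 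Worse, \eqref{ineq 3} fails outright near the resonance: for any $\xi$ pick $\xi_1$ with $2\xi\cdot\xi_1=|\xi|^2\mp|\xi|$ (for instance $\xi_1=\left(\tfrac12\mp\tfrac{1}{2|\xi|}\right)\xi$), then set $\tau_1=|\xi_1|^2$ and $\tau=|\xi_1|^2-|\xi-\xi_1|^2=\mp|\xi|$; all three brackets equal $1$ while $\bra{\xi}^2$ is arbitrarily large. This is precisely the Schr\"odinger--wave resonance well known from the Zakharov system. It is worth noting that the paper's own second sub-case (the regime $|\xi|<3|\xi_1|$, $\tfrac14|\xi|\le|\xi-\xi_1|$) silently replaces $\tau_1-|\xi_1|^2$ with $\tau_1+|\xi_1|^2$, which is exactly the sign flip that would turn the difference into the sum your parallelogram argument needs --- but the second modulation really is $\bra{\tau_1-|\xi_1|^2}$, as it comes from $\widehat{\overline{\psi}}$, so neither your route nor the paper's closes as stated; a quantitative restriction away from the resonance curve, or a different mechanism entirely, is needed for this item.
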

	
	\begin{proof}
		i) This inequality follows directly Cauchy-Schwartz inequality.
		
		\vspace*{1mm}
		ii) 	If $|\xi| \leq 4 $, then the estimate is obvious. Let $|\xi|>4,$ then we have
		\[
		|\tau + |\xi|^2| + |\tau - \tau_1 + |\xi - \xi_1|^2| + |\tau_1 \pm |\xi_1|| \geq ||\xi|^2 - ||\xi- \xi_1|^2 \mp |\xi_1|||.	  
		\]
		Moreover,
		\[
		|\xi|^2 - ||\xi- \xi_1|^2 \mp |\xi_1|| \geq |\xi|^2 - (|\xi - \xi_1|^2 + |\xi_1|)
		\]
		combining with 
		\[
		|\xi - \xi_1|\leq \frac{|\xi|}{2},\
		\]
		and
		\[
		|\xi_1| = |\xi_1 - \xi + \xi| \leq   \frac{3}{2} |\xi|,
		\]
		we have
		\[
		|\xi|^2 - ||\xi- \xi_1|^2 \mp |\xi_1|| \geq \frac{3}{4} |\xi|^2 - \frac{3}{2}|\xi| = \frac{3}{8} |\xi|(|\xi| -4) + \frac{3}{8} |\xi|^2 \geq \frac{3}{8}  |\xi|^2.
		\]
		That completes the proof of \eqref{ineq 2}.
		
		\vspace*{1mm}
		iii) We use the similar argument as in previous part, if $|\xi| < C$ for a general constant $C$ then \eqref{ineq 3} holds. That means in next step we can assume that $|\xi|$ as large as we need.
		
		By	Using the triangle inequality we have
		\[
		\bra{\tau - \tau_1 + |\xi - \xi_1|^2} + \bra{\tau_1 - |\xi_1|^2} + \bra{\tau \pm |\xi|}  \gtrsim 3 + \big ||\xi-\xi_1|^2 - |\xi_1|^2 \mp |\xi| \big |.
		\]
		If $|\xi| \geq 3 |\xi_1|$ or $|\xi_1|\leq \frac{1}{3} |\xi|$ then 
		\[
		|\xi - \xi_1| - |\xi_1| \geq |\xi| - 2 |\xi_1| \geq \frac{1}{3}|\xi|,
		\]
		so
		\[
		\begin{split}
			\big ||\xi-\xi_1|^2 - |\xi_1|^2 \mp |\xi| \big | & \geq \big ||\xi - \xi_1|^2 - |\xi_1|^2 \big | - |\xi| \\
			& = \big | |\xi - \xi_1| - |\xi_1| \big | \big (|\xi- \xi_1| + |\xi_1|\big ) - |\xi| \\
			& \geq \frac{1}{3} |\xi|^2 - |\xi|\\
			& \geq \frac{1}{6}  |\xi|^2 \quad  (\text{ if } |\xi| \geq 6).
		\end{split}
		\]
		Then, if $|\xi|>6$ we have 
		\[
		\bra{\tau - \tau_1 + |\xi - \xi_1|^2} + \bra{\tau_1 - |\xi_1|^2} + \bra{\tau \pm \bra{\xi}}  \gtrsim \bra{\xi}^2.
		\]
		\vspace*{2mm}
		
		If $|\xi | < 3 |\xi_1|$ then we continue to split the domain of $\xi$ and $\xi_1$.\\
		\vspace*{1mm}
		\noindent	If $\frac{1}{4} |\xi| \leq  |\xi - \xi_1|$ then 
		\[
		\frac{1}{9}  |\xi|^2 + \frac{1}{16}  |\xi|^2 \mp |\xi|	 \leq \tau - \tau_1 +  |\xi - \xi_1|^2 + \tau_1 +  |\xi_1|^2 - (\tau \pm |\xi|),
		\]
		so, for $|\xi| > 16$
		\[
		\frac{1}{9}  |\xi|^2 \leq  |\tau - \tau_1 +  |\xi - \xi_1|^2| + |\tau_1 +  |\xi_1|^2| + |\tau \pm |\xi||,
		\]
		or equivalently, \eqref{ineq 3} holds.
		
		\vspace*{1mm}
		If $\frac{1}{4} |\xi| >  |\xi - \xi_1|$ then
		\[
		|\xi_1| - |\xi - \xi_1| \geq |\xi_1| - \frac{1}{4} |\xi|,
		\]
		note that we are considering the case: $|\xi_1| > \frac{1}{3} |\xi|$, so
		\[
		|\xi_1| - |\xi - \xi_1| > \frac{1}{12} |\xi|.
		\]
		Let observe again
		\[
		\begin{split}
			\big | |\xi_1|^2 - |\xi - \xi_1|^2 \mp |\xi| \big |  & \geq \big | |\xi_1|^2 - |\xi - \xi_1|^2   \big | - |\xi| \\
			& = \big | |\xi - \xi_1| - |\xi_1| \big |  \big (|\xi- \xi_1| + |\xi_1| \big ) - |\xi|\\
			& = \big (|\xi_1| - |\xi - \xi_1| \big ) \big (|\xi- \xi_1| + |\xi_1| \big ) - |\xi|\\
			& > \frac{1}{12} |\xi|^2 - |\xi|\\
			& > \frac{1}{24}  |\xi|^2 \quad (\text{if } |\xi|>24).
		\end{split}
		\]

		Finally, if $|\xi| > Max (M_1,M_2)$ then \eqref{ineq 3} holds.
		
		\vspace*{1mm}
		iv) We first prove \eqref{ineq 4}. Using the Cauchy-Schwartz inequality it is not difficult to see that
		\[
		\begin{split}
			\bra{\xi}^2 \bra{\tau \pm  |\xi|^2} & = \sqrt{(1+ |\tau \pm |\xi|^2|^2) (1+|\xi|^2)^2} \\
			& \gtrsim \sqrt{(1+ |\tau \pm |\xi|^2|^2) (1+ |\xi|^4)} \\
			& \gtrsim \bra{\tau}^{1/2}.
		\end{split}
		\]
		That is \eqref{ineq 4} and \eqref{ineq 5} follows  directly.
	\end{proof}

	\section{Nonlinear estimates} \label{Section nonlinear estimate}
	In this section, we are going to prove the nonlinear estimates \eqref{nl psi 1_2}-\eqref{nl phi 4_2} and finish the proof of the main theorem. Our goal is obtaining positive order of \(T\) so that \eqref{ZR Hung} can be solved locally in time. The argument relies on the fixed-point technique which  is similar as  in \cite{Luong2018a} and \cite{Ginibre1997}. We need to estimates all the nonlinear terms in cut-off integral equations  \eqref{cutoff psi}, \eqref{cutoff rho}, \eqref{cutoff phi}, or more precisely the estimates from \eqref{nl psi 1}-\eqref{nl phi 4}.
	The proof is organized as follows,
	\begin{itemize}
		\item [(i)] First, in \ref{subsection nonlinear estimates}, We prove the estimates for $I_1,I_2,I_4$ and $I_5$.\\
	The following pairs of integrals have similar form then their proofs are essentially the same: $I_2$ and $I_3$, $I_4$ and $I_8$, $I_5$ and $I_9$.\\
		The estimates for $I_6$,  $I_7, I_{10}$ and $I_{11}$ can be deduced directly from the estimates for $I_4, I_5, I_8$ and $I_9$ respectively.
		
		\item [(ii)] Finally, in \ref{subsection proof}, we summarize the condition of parameters \(b_1,b_2\) those define the order of Sobolev spaces.
	\end{itemize}

	\vspace{2mm}
	\subsection{Nonlinear estimates} \label{subsection nonlinear estimates}
	First, let consider $I_1$, using \eqref{ineq 1}, Plancherel identity and the H\"older inequality we have
	\begin{equation}
		\label{I1}
		\begin{split}
			|I_1| & \leq \int \frac{(\bra{\xi_2} + \bra{\xi_1-\xi_2} + \bra{\xi - \xi_1}) |\widehat{w}| |\widehat{w}| |\widehat{\overline{w}}| |\widehat{v_1}|}{\bra{\xi_2} \bra{\xi_1-\xi_2} \bra{\xi - \xi_1} } \\
			& \qquad \frac{1}{\bra{\tau_2+ |\xi_2|^2}^{b_1} \bra{\tau_1-\tau_2+ |\xi_1-\xi_2|^2}^{b_1}   \bra{\tau - \tau_1- |\xi - \xi_1|^2}^{b_1} \bra{\tau + |\xi|^2}^{c_1}} \\
		\end{split}
	\end{equation}
	Using the H\"older inequality and the Plancherel identity, the right hand side (RHS) of \eqref{I1} is bounded by the terms of the following form
	\[
	\begin{split}
		& \norm{\mathcal{F}^{-1} ( \bra{\xi}^{-1} \bra{\tau + |\xi|^2}^{-b_1} |\widehat{w}|)}_{L_t^{q_1}L_x^{r_1}}^2   \norm{\mathcal{F}^{-1} (\bra{\tau + |\xi|^2}^{-b_1} |\widehat{w}|)}_{L_t^{q_1}L_x^{r_2}} \\ 
		& \quad \norm{\mathcal{F}^{-1} (\bra{\tau + |\xi|^2}^{-c_1} |\widehat{v_1}|)}_{L_t^{q_3}L_x^{r_3}},
	\end{split}
	\]
	provided that
	\begin{align}
		& \label{Holder I1 1} \frac{3}{q_1} + \frac{1}{q_3} =1,\\
		& \label{Holder I1 2} 2 \delta(r_1) + \delta(r_2)+ \delta(r_3)=d,
	\end{align}
	we remind that \(\delta(r):= \frac{d}{2} - \frac{d}{r}\).
	
	The two terms: $\norm{\mathcal{F}^{-1} (\bra{\tau +  |\xi|^2}^{-b_1} |\widehat{w}|)}_{L_t^{q_1}L_x^{r_2}}$ and \\ $\norm{\mathcal{F}^{-1} (\bra{\tau + |\xi|^2}^{-c_1} |\widehat{v_1}|)}_{L_t^{q_3}L_x^{r_3}}$ are estimated in terms of  $\norm{w}_2$ and $\norm{v_1}_2$ via Lemma \ref{lemma ineq schr} with the following constrains:
	\begin{align*}
		&  \frac{2}{q_1} = 1- \eta(1-\gamma) \frac{b_1}{b_0},\\
		& \delta(r_2) = (1-\eta)(1-\gamma) \frac{b_1}{b_0},\\
		& \frac{2}{q_3} = 1- \eta(1-\gamma) \frac{c_1}{b_0}, \\
		& \delta(r_3) = (1-\eta)(1-\gamma) \frac{c_1}{b_0}.
	\end{align*}

	For  $ \norm{\mathcal{F}^{-1} ( \bra{\xi}^{-1} \bra{\tau +  |\xi|^2}^{-b_1} |\widehat{w}|)}_{L_t^{q_1}L_x^{r_1}}  $, we first use the Sobolev's embedding theorem
	\[
	W^{1,r_2}(\R^d) \hookrightarrow L_x^{r_1} (\R^d)  \text{ if } 1 \geq \delta(r_1) - \delta(r_2),
	\]
	then it can be bounded by $\norm{w}_2$ using Lemma \ref{lemma ineq schr} as in previous step.

	Therefore, \eqref{Holder I1 1} and \eqref{Holder I1 2} lead to
	\begin{align}
		& \label{I1 6} \frac{\eta(1-\gamma) (2b_1+1)}{2b_0} =1, \\
		& \label{I1 7} \frac{(1-\eta)(1-\gamma)(2b_1+1)}{b_0} \geq d-2.
	\end{align}
	Combining \eqref{I1 6} and \eqref{I1 7} we obtain
	\[
	\eta \leq \frac{2}{d},
	\]
	that suggests us to take $\eta = \frac{2}{d}$ and then 
	\[
	1-\gamma = \frac{db_0}{2b_1 +1}.
	\]
	It remains to choose $b_0$ such that $b_0 >1/2$, $(1-\gamma)b_1 \leq b_0$ and $0 \leq 1-\gamma \leq 1$.\\
	If we choose $b_0=b_1$ then we only need to verify that $1-\gamma < 1$. It is not difficult to see that holds for $d=2,3$.
	
	Therefore, we have
	\begin{equation}
		\label{final I1} |I_1| \lesssim  T^{\theta_1} \norm{w}_2^3 \norm{v_1}_2,
	\end{equation}
	where
	\begin{equation}
		\label{t I1}  \theta_1= \left(1- \frac{db_1}{2b_1+1}\right) \left(\frac{5}{2} - b_1\right).
	\end{equation}
	and \(\theta_1>0\).

	\textbf{Estimate $I_2$}. Using the Schwartz inequality, we have
	\[
	\begin{split}
		I_2 & = \int \frac{ \bra{\xi} \widehat{u}\widehat{w} \widehat{v_1} }{ \bra{\xi_1}^{k_2} \bra{\xi - \xi_1} \bra{\tau_1 \pm |\xi_1|}^{b_2}  \bra{\tau - \tau_1 + |\xi - \xi_1|^2}^{b_1} \bra{\tau + |\xi|^2}^{c_1} } \\
		& \leq \int\frac{(\bra{\xi_1}^{k_2} + \bra{\xi - \xi_1}^{k_2}) \bra{\xi}^{1-k_2} \widehat{u}\widehat{w} \widehat{v_1} }{ \bra{\xi_1}^{k_2} \bra{\xi - \xi_1} \bra{\tau_1 \pm |\xi_1|}^{b_2}  \bra{\tau - \tau_1 + |\xi - \xi_1|^2}^{b_1} \bra{\tau + |\xi|^2}^{c_1} } \\
		& = \int \frac{ \bra{\xi}^{1-k_2} \widehat{u}\widehat{w} \widehat{v_1}}{\bra{\xi - \xi_1} \bra{\tau_1 \pm |\xi_1|}^{b_2}  \bra{\tau - \tau_1 + |\xi - \xi_1|^2}^{b_1} \bra{\tau + |\xi|^2}^{c_1}} \\
		& \quad + \int \frac{\bra{\xi}^{1-k_2} \widehat{u}\widehat{w} \widehat{v_1}}{\bra{\xi_1}^{k_2} \bra{\xi - \xi_1}^{1-k_2} \bra{\tau_1 \pm |\xi_1|}^{b_2}  \bra{\tau - \tau_1 + |\xi - \xi_1|^2}^{b_1} \bra{\tau + |\xi|^2}^{c_1}} \\
		& = I_{21} + I_{22} + I_{23} + I_{24}.
	\end{split}
	\]
	Where
	
	\begin{align*}
		& I_{21} = \int_{|\xi| \leq 2 |\xi - \xi_1|} \frac{\widehat{u}\widehat{w} \widehat{v_1}}{ \bra{\xi -\xi_1}^{k_2} \bra{\tau_1 \pm |\xi_1|}^{b_2}  \bra{\tau - \tau_1 + |\xi - \xi_1|^2}^{b_1} \bra{\tau + |\xi|^2}^{c_1}}, \\
		&  I_{22} = \int_{|\xi |> 2|\xi - \xi_1|} \frac{\bra{\xi}^{2b_2} \widehat{u}\widehat{w} \widehat{v_1}}{\bra{\xi - \xi_1} \bra{\tau_1 \pm |\xi_1|}^{b_2}  \bra{\tau - \tau_1 + |\xi - \xi_1|^2}^{b_1} \bra{\tau + |\xi|^2}^{c_1}}, \\
		& I_{23} = \int_{|\xi| \leq 2 |\xi - \xi_1|} \frac{\widehat{u}\widehat{w} \widehat{v_1}}{\bra{\xi_1}^{k_2} \bra{\tau_1 \pm |\xi_1|}^{b_2}  \bra{\tau - \tau_1 + |\xi - \xi_1|^2}^{b_1} \bra{\tau + |\xi|^2}^{c_1} }, \\
		& I_{24} = \int_{|\xi| > 2|\xi - \xi_1|} \frac{\bra{\xi}^{2b_2} \widehat{u}\widehat{w} \widehat{v_1}}{\bra{\xi_1}^{k_2} \bra{\xi - \xi_1}^{2b_2} \bra{\tau_1 \pm |\xi_1|}^{b_2} \bra{\tau - \tau_1 + |\xi - \xi_1|^2}^{b_1} \bra{\tau + |\xi|^2}^{c_1} }.
	\end{align*}
	
	\vspace{0.2cm}
	\textbf{Estimate $I_{21}$:} Using the H\"older inequality we obtain that
	\begin{equation} \label{I21}
		\begin{split}
			|I_{21}| & \leq \norm{\mathcal{F}^{-1}\left(\bra{\xi}^{-k_2} \bra{\tau +  |\xi^2|}^{-c_1} |\widehat{v_1}| \right)}_{L_t^{q_1} L_x^{r_1}} \norm{\mathcal{F}^{-1} \left( \bra{\tau +  |\xi|^2}^{-b_1} |\widehat{w}| \right)}_{L_t^{q_2}L_x^{r_2}} \\
			& \quad \norm{\mathcal{F}^{-1} \left(  \bra{\tau \pm |\xi|}^{-b_2}  |\widehat{u}|\right)}_{L_t^{q_3} L_x^{2}},
		\end{split}
	\end{equation}
	provided that
	
	\begin{align} 
		&\frac{1}{q_1} + \frac{1}{q_2} + \frac{1}{q_3}=1,  \label{Holder I21 1}\\
		& \frac{1}{r_1} + \frac{1}{r_2} = \frac{1}{2} \text{ or } \delta(r_1) + \delta(r_2) = \frac{d}{2}. \label{Holder I21 2}
	\end{align}
	
	Using the Sobolev's embedding theorem, we know that 
	
	\begin{equation} \label{Sobolev I21}
		W^{k_2,r_1'} \hookrightarrow L_x^{r_1} \text{ if  } k_2 \geq \delta(r_1) - \delta(r_1').
	\end{equation}
	
	The first term of \eqref{I21} is bounded by $\norm{\mathcal{F}^{-1} \left( \bra{\tau +  |\xi|^2}^{-c_1} |\widehat{v_1}|\right)}_{L_t^{q_1} L_x^{r_1'}}$.  Then, this term and the last two terms of \eqref{I21} can be estimated by using Lemma \ref{lemma ineq schr}, provided that
	
	\begin{align*}
		& \frac{2}{q_1} = 1- \eta(1-\gamma) \frac{c_1}{b_0}, \\
		& \delta(r_1')= (1-\eta)(1-\gamma)\frac{c_1}{b_0},\\
		& \frac{2}{q_2} = 1- \eta(1-\gamma) \frac{b_1}{b_0}, \\
		& \delta(r_2) = (1-\eta)(1-\gamma) \frac{b_1}{b_0}, \\
		& \frac{2}{q_3} = 1- (1-\gamma) \frac{b_2}{b_0}.
	\end{align*}
	Therefore the restrictions \eqref{Holder I21 1}-\eqref{Holder I21 2} and  \eqref{Sobolev I21} become
	\begin{align}
		& (1-\gamma) \frac{b_2 + \eta}{b_0} =1,  \label{Condition I21 1}\\
		&\frac{ (1-\eta)(1-\gamma) }{b_0} \geq \frac{d}{2}  +2b_2 -1 .  \label{Condition I21 2}
	\end{align}
	From \eqref{Condition I21 1}, \eqref{Condition I21 2} we have that
	\[
	\eta \leq \frac{1+b_2}{d/2+ 2b_2} -b_2
	\]
	that suggests us to take $\eta = \frac{1+b_2}{d/2+ 2b_2} -b_2$. Indeed, for $d=2,3$ we can verify that $0 \leq \eta \leq 1$, then $1-\gamma = \frac{b_0(d+4b_2)}{2+2b_2}$.\\
	If we choose $b_0=b_1$ then it remains to ensure that $1-\gamma  < 1$, or equivalently
	\begin{equation}
		\label{auxi 1}	 b_1 < \frac{2+2b_2}{d+4b_2}.
	\end{equation}
	It is not difficult to see that for $b_2<\frac{1}{2}$ the right hand side of \eqref{auxi 1} is always strictly greater than $\frac{1}{2}$. Thus, in general the assumption $b_1>\frac{1}{2}$ makes sense. However, we will need to combine \eqref{auxi 1} with later constrains from other estimates to conclude on the final condition of $b_1$.
	
	Therefore, we have
	\begin{equation}
		\label{final I21}
		|I_{21}| \lesssim  T^{\theta_{21}} \norm{v_1}_2 \norm{w}_2 \norm{u}_2,
	\end{equation}
	where
	\begin{equation}
		\label{t I21}  \theta_{21} = (1-\frac{b_1(d+4b_2)}{2+2b_2}) (b_2+\frac{3}{2} -b_1)>0.
	\end{equation}

	\textbf{Estimate $I_{22}$:} Using \eqref{ineq 2} we see that If $|\xi| > 2|\xi - \xi_1|$ then
	\[
	 \bra{\xi}^{2b_2} \lesssim \bra{\tau_1 \pm |\xi_1|}^{b_2}  + \bra{\tau - \tau_1 + |\xi - \xi_1|^2}^{b_2} + \bra{\tau +  |\xi|^2}^{b_2}.
	\]
	That implies
	\[
	|I_{22}| \leq I_{221} + I_{222}+ I_{223},
	\]
	where
	\begin{align*}
		& I_{221} = \int_{|\xi |> 2|\xi - \xi_1|} \frac{|\widehat{u}| |\widehat{w}| |\widehat{v_1}| }{ \bra{\xi - \xi_1} \bra{\tau - \tau_1+  |\xi - \xi_1|^2}^{b_1} \bra{\tau + |\xi|^2}^{c_1}  }, \\
		& I_{222} = \int_{|\xi |> 2|\xi - \xi_1|} \frac{ |\widehat{u}| |\widehat{w}| |\widehat{v_1}| }{\bra{\xi - \xi_1} \bra{\tau_1 \pm |\xi_1|}^{b_2} \bra{\tau - \tau_1 +  |\xi - \xi_1|^2}^{b_1 - b_2} \bra{\tau +  |\xi|^2}^{c_1}}, \\
		& I_{223} = \int_{|\xi |> 2|\xi - \xi_1|} \frac{ |\widehat{u}| |\widehat{w}| |\widehat{v_1}|}{\bra{\xi - \xi_1}  \bra{\tau_1 \pm |\xi_1|}^{b_2}  \bra{\tau - \tau_1 + |\xi - \xi_1|^2}^{b_1}  \bra{\tau + |\xi|^2}^{c_1-b_2}}.
	\end{align*}
	\vspace{2mm}
	
	For $I_{221}$, by using the H\"older inequality we have
	\vspace{2mm}
	\begin{equation}
		\begin{split}
			\label{I221} I_{221} & \leq  \norm{\mathcal{F}^{-1} \left(  \bra{\xi}^{-1} \bra{\tau +  |\xi|^2}^{-b_1} |\widehat{w}| \right)}_{L_t^{q_1}L_x^{r_1}} \norm{\mathcal{F}^{-1} \left(   \bra{\tau +  |\xi|^2}^{-c_1} |\widehat{v_1}| \right)}_{L_t^{q_2}L_x^{r_2}} \\
			& \quad \norm{\mathcal{F}^{-1}(|\widehat{u}|)}_{L_t^2L_x^2}
		\end{split}
	\end{equation}
	provided that
	\begin{align}
		\label{Holder I221 1} & \frac{1}{q_1} + \frac{1}{q_2} = \frac{1}{2}, \\
		\label{Holder I221 2} & \frac{1}{r_1} + \frac{1}{r_2} = \frac{1}{2} \; \text{ or equivalently } \delta(r_1) + \delta(r_2) = \frac{d}{2}.
	\end{align}
	The last term of \eqref{I221} is bounded by $\norm{u}_2$, the second term is treated by using the Lemma \ref{lemma ineq schr} that leads to the following restrictions
	\begin{align*}
		& \frac{2}{q_2} = 1- \eta(1-\gamma) \frac{c_1}{b_0}, \\
		& \delta(r_2) = (1-\eta)(1-\gamma) \frac{c_1}{b_0}.
	\end{align*}
	Using the Sobolev's embedding theorem, the first term of \eqref{I221} is bounded by $\norm{\mathcal{F}^{-1} \left(  \bra{\tau +  |\xi|^2}^{-b_1} |\widehat{w}| \right)}_{L_t^{q_1}L_x^{r_1'}}$, provided that
	\begin{equation}
		\label{Sobolev I221}
		1 \geq \delta(r_1) - \delta(r_1').
	\end{equation}
	Then, we can use the Lemma \ref{lemma ineq schr} with 
	\begin{align*}
		& \frac{2}{q_1} = 1- \eta(1-\gamma) \frac{b_1}{b_0}, \\
		& \delta(r_1')= (1-\eta)(1-\gamma) \frac{b_1}{b_0}.
	\end{align*}
	Therefore, the restrictions \eqref{Holder I221 1}, \eqref{Holder I221 2} are equivalent to
	\begin{align}
		\label{Condition I221 1} & \eta(1-\gamma) = b_0,\\
		\label{Condition I221 2} & 1+ \frac{(1-\eta)(1-\gamma)}{b_0} \geq \frac{d}{2}.
	\end{align}
	We see that \eqref{Condition I221 1} and \eqref{Condition I221 2} lead to $\eta \leq \frac{2}{d}$. That suggests us to take
	\[
	\eta = \frac{2}{d},
	\]
	then
	\[
	1-\gamma = \frac{b_0 d}{2}.
	\]
	If we take $b_0=b_1$ then the constrain $1-\gamma < 1$ implies
	\begin{equation}
		\label{auxi 2}	 b_1 < \frac{2}{d}.
	\end{equation}
	Therefore,
	\begin{equation}
		\label{final I221} |I_{221}| \lesssim  T^{\theta_{221}} \norm{w}_2 \norm{v_1}_2 \norm{u}_2,
	\end{equation}
	with 
	\begin{equation}
		\label{t I221}  \theta_{221} = (1- \frac{b_1 d}{2}) (\frac{3}{2} - b_1).
	\end{equation}

	
	For $I_{222}$, using the H\"older inequality we have
	\begin{equation}
		\label{I222}
		\begin{split}
			I_{222} & \leq   \norm{\mathcal{F}^{-1}\left(\bra{\tau \pm |\xi|}^{-b_2} |\widehat{u}|\right)}_{L_t^{q_1}L_x^{2}} \norm{\mathcal{F}^{-1}\left( \bra{\xi}^{-1} \bra{\tau +  |\xi|^2}^{b_2-b_1} |\widehat{w}|\right)}_{L_t^{q_2}L_x^{r_2}} \\
			& \quad \norm{\mathcal{F}^{-1} \left( \bra{\tau +  |\xi|^2}^{-c_1} |\widehat{v_1}| \right)}_{L_t^{q_3}L_x^{r_3}}
		\end{split}
	\end{equation}
	provided that
	\begin{align}
		\label{Holder I222 1} \frac{1}{q_1} + \frac{1}{q_2} + \frac{1}{q_3} =1, \\
		\label{Holder I222 2} \delta(r_2) + \delta(r_3) = \frac{d}{2}.
	\end{align}
	For the second term of \eqref{I222}, using the Sobolev embedding theorem we have
	\[
	\norm{\mathcal{F}^{-1}\left( \bra{\xi}^{-1} \bra{\tau +  |\xi|^2}^{b_2-b_1} |\widehat{w}|\right)}_{L_t^{q_2}L_x^{r_2}} \lesssim \norm{\mathcal{F}^{-1}\left(  \bra{\tau +  |\xi|^2}^{b_2-b_1} \right)}_{L_t^{q_2}L_x^{r_2'}},
	\]
	if
	\begin{equation}
		\label{Sobolev I222}
		1 \geq \delta(r_2) - \delta(r_2').
	\end{equation}
	$\norm{\mathcal{F}^{-1}\left(  \bra{\tau +  |\xi|^2}^{b_2-b_1} \right)}_{L_t^{q_2}L_x^{r_2'}}$ and the first and the last terms of \eqref{I222} are estimated by using Lemma \ref{lemma ineq schr} provided that
	\begin{align*}
		& \frac{2}{q_1} = 1- (1-\gamma) \frac{b_2}{b_0}, \\
		& \frac{2}{q_2} = 1- \eta(1-\gamma) \frac{b_1-b_2}{b_0}, \\
		& \delta(r_2')= (1-\eta)(1-\gamma) \frac{b_1-b_2}{b_0}, \\
		& \frac{2}{q_3} = 1- \eta(1-\gamma) \frac{c_1}{b_0}, \\
		& \delta(r_3) = (1-\eta)(1-\gamma) \frac{c_1}{b_0}.
	\end{align*}
	Therefore \eqref{Holder I222 1}, \eqref{Holder I222 2} and \eqref{Sobolev I222} become
	\begin{align}
		\label{Condition I222 1} & (1-\gamma) \left(  (1-\eta)b_2 + \eta \right) =b_0,\\
		\label{Condition I222 2} & 1 + (1-\eta)(1-\gamma) \frac{1-b_2}{b_0} \geq \frac{d}{2}.
	\end{align}
	\eqref{Condition I222 1} and \eqref{Condition I222 2} lead to $\eta \leq \frac{2-db_2}{d(1-b_2)}$. That suggests us to take
	\[
	\eta= \frac{2-db_2}{d(1-b_2)},
	\]
	then
	\[
	1-\gamma = \frac{d b_0}{2}.
	\]
	If we take $b_0=b_1$ then we only need to verify $1-\gamma < 1$ that requires
	\begin{equation*}
		b_1 < \frac{2}{d},
	\end{equation*}
	that is exactly \eqref{auxi 2}. Hence
	\begin{equation}
		\label{final I 222}
		I_{222} \lesssim  T^{\theta_{222}} \norm{u}_2 \norm{w}_2 \norm{v_1}_2,
	\end{equation}
	where
	\begin{equation}
		\label{t I222} \theta_{222} = (1-\frac{d b_1}{2}) ( 1- [b_1-b_2 - 1/2]_+).
	\end{equation}

	For $I_{223}$, using the H\"older inequality we get
	\begin{equation}
		\label{I223}
		\begin{split}
			 I_{223}		& \quad \leq  \norm{\mathcal{F}^{-1} \left(  \bra{\tau \pm |\xi|}^{-b_2} |\widehat{u}| \right)}_{L_t^{q_1} L_x^2} \norm{\mathcal{F}^{-1}\left( \bra{\xi}^{-1} \bra{\tau +  |\xi|^2}^{-b_1} |\widehat{w}| \right)}_{L_t^{q_2} L_x^{r_2}} \\
			& \qquad \quad \norm{\mathcal{F}^{-1} \left(   \bra{\tau +  |\xi|^2}^{b_2-c_1} |\widehat{v_1}| \right)}_{L_t^{q_3} L_x^{r_3}},
		\end{split}
	\end{equation}
	provided that
	\begin{align}
		\label{Holder I223 1} & \frac{1}{q_1} + \frac{1}{q_2} + \frac{1}{q_3} =1, \\
		\label{Holder I223 2} & \delta(r_2) + \delta(r_3) = \frac{d}{2}.
	\end{align}
	We continue as previous part, by the Sobolev's embedding theorem
	\[
	\norm{\mathcal{F}^{-1}\left( \bra{\xi}^{-1} \bra{\tau +  |\xi|^2}^{-b_1} |\widehat{w}| \right)}_{L_t^{q_2} L_x^{r_2}} \leq \norm{\mathcal{F}^{-1}\left( \bra{\tau +  |\xi|^2}^{-b_1} |\widehat{w}| \right)}_{L_t^{q_2} L_x^{r_2'}},
	\]
	provided that
	\begin{equation}
		\label{Sobolev I223}
		1 \geq \delta(r_2)-\delta(r_2').
	\end{equation}
	Then the use of Lemma \ref{lemma ineq schr} leads to the following restrictions
	\begin{align*}
		& \frac{2}{q_1}= 1- (1-\gamma) \frac{b_2}{b_0}, \\
		& \frac{2}{q_2} = 1- \eta(1-\gamma) \frac{b_1}{b_0}, \\
		& \delta(r_2') = (1-\eta)(1-\gamma) \frac{b_1}{b_0}, \\
		& \frac{2}{q_3} = 1- \eta(1-\gamma ) \frac{c_1-b_2}{b_0}, \\
		& \delta(r_3)= (1-\eta)(1-\gamma) \frac{c_1-b_2}{b_0}.
	\end{align*}
	The conditions \eqref{Holder I223 1}-\eqref{Holder I223 2} and \eqref{Sobolev I223} then become
	\begin{align}
		\label{Condition I223 1} & (1-\gamma) \left(  b_2 + \eta(1-b_2)\right) =b_0, \\
		\label{Condition I223 2} & 1+(1-\eta)(1-\gamma) \frac{1-b_2}{b_0} \geq \frac{d}{2}.
	\end{align}
	With the same argument as for $I_{222}$, we can take
	\[
	\eta= \frac{2-db_2}{d(1-b_2)}, \, 1- \gamma = \frac{d b_1}{2},
	\]
	with the following condition in $b_1, c_1, b_2$
	\begin{equation}
		\label{auxi 3}
		\left \{
		\begin{split}
			& b_1 < \frac{2}{d}, \\
			& b_2 < c_1 = 1-b_1.
		\end{split}
		\right.
	\end{equation}

	Hence
	\begin{equation}
		\label{final I 223}
		I_{223} \lesssim  T^{\theta_{223}} \norm{u}_2 \norm{w}_2 \norm{v_1}_2,
	\end{equation}
	where
	\begin{equation}
		\label{t I223}  \theta_{223}= (1-\frac{d b_1}{2}) (\frac{3}{2} -b_1).
	\end{equation}
	
	\vspace{2mm}
	
	Using \eqref{final I221}, \eqref{final I 222}, \eqref{final I 223} we summarize the estimate for $I_{22}$.
	\begin{equation}
		\label{final I22}
		I_{22} \lesssim  T^{\theta_{22}} \norm{u}_2 \norm{w}_2 \norm{v_1}_2,
	\end{equation}
	where
	\begin{equation}
		\label{t I22}
		\theta_{22} = \min(\theta_{221}, \theta_{222}, \theta_{223}).
	\end{equation}
	Which is strictly positive with the suitable choice of \(b_1, b_2\).

	\textbf{Estimate $I_{23}$:} We have
	\begin{equation}
		\label{I23} \begin{split}
			|I_{23}| \leq & \norm{\mathcal{F}^{-1} \left(\bra{\xi}^{-k_2} \bra{\tau \pm |\xi|}^{-b_2} |\widehat{u}|\right)}_{L_t^{q_1}L_x^{r_1}} \\
			& \norm{\mathcal{F}^{-1} \left( \bra{\tau +  |\xi|^2}^{-b_1} |\widehat{w}| \right)}_{L_t^{q_2} L_x^{r_2}} \\
			&  \norm{\mathcal{F}^{-1} \left( \bra{\tau +  |\xi|^2}^{-c_1} |\widehat{v_1}| \right)}_{L_t^{q_3} L_x^{r_3}}, 
		\end{split}
	\end{equation}
	with
	\begin{align}
		\label{Holder I23 1} & \frac{1}{q_1} + \frac{1}{q_2} + \frac{1}{q_3} = 1,\\
		\label{Holder I23 2} & \delta(r_1) + \delta(r_2) + \delta(r_3) = \frac{d}{2}.
	\end{align}
	Using the Sobolev's embedding theorem we can estimate the first term of \eqref{I23} as follows
	\[
	\norm{\mathcal{F}^{-1} \left(\bra{\xi}^{-k_2} \bra{\tau \pm |\xi|}^{-b_2} |\widehat{u}|\right)}_{L_t^{q_1}L_x^{r_1}} \leq \norm{\mathcal{F}^{-1} \left( \bra{\tau \pm |\xi|}^{-b_2} |\widehat{u}|\right)}_{L_t^{q_1}L_x^{2}}  ,
	\]
	provided that
	\[
	k_2 \geq \delta(r_1) - \delta(2) = \delta(r_1).
	\]
	Next, we use Lemma \ref{lemma ineq schr}, that leads to the following conditions
	\begin{align*}
		& \frac{2}{q_1} = 1- (1-\gamma) \frac{b_2}{b_0}, \\
		& \frac{2}{q_2} = 1- \eta(1-\gamma) \frac{b_1}{b_0}, \\
		& \delta(r_2) = (1-\eta)(1-\gamma) \frac{b_1}{b_0}, \\
		& \frac{2}{q_3} = 1-\eta(1-\gamma) \frac{c_1}{b_0}, \\
		& \delta(r_3) = (1-\eta)(1-\gamma) \frac{c_1}{b_0}.
	\end{align*}
	Then \eqref{Holder I23 1} and \eqref{Holder I23 2} imply that
	\begin{align}
		\label{Condition I23 1} & (1-\gamma)(b_2 + \eta) = b_0, \\
		\label{Condition I23 2} & \frac{(1-\eta)(1-\gamma)}{b_0} \geq\frac{d}{2} + 2b_2 -1.
	\end{align}
	We can see that \eqref{Condition I23 1}-\eqref{Condition I23 2} are exactly \eqref{Condition I21 1}-\eqref{Condition I21 2}, so we have the following estimate of $I_{23}$
	\begin{equation}
		\label{final I23}
		|I_{23}| \lesssim  T^{\theta_{23}} \norm{v_1}_2 \norm{w}_2 \norm{u}_2,
	\end{equation}
	where
	\begin{equation}
		\label{t I23}  \theta_{23} = (1-\frac{b_1(d+4b_2)}{2+2b_2}) (b_2+\frac{3}{2} -b_1).
	\end{equation}

	\textbf{Estimate $I_{24}$:} Using \eqref{ineq 2} we have
	\begin{equation}
		\label{I24}
		\begin{split}
			I_{24} \leq & \int_{|\xi |> 2|\xi - \xi_1|} \frac{\left(  \bra{\tau_1 \pm |\xi_1|}^{b_2} + \bra{\tau - \tau_1 + |\xi - \xi_1|^2}^{b_2} + \bra{\tau +  |\xi|^2}^{b_2}  \right) \widehat{u} \widehat{w} \widehat{v_1}}{ \bra{\xi_1}^{1-2b_2} \bra{\xi - \xi_1}^{2b_2} \bra{\tau_1 \pm |\xi_1|}^{b_2} \bra{\tau - \tau_1 + |\xi -\xi_1|^2}^{b_1} \bra{\tau +  |\xi|^2}^{c_1}  }, \\
			& \leq I_{241} + I_{242} + I_{243}, 
		\end{split}
	\end{equation}
	where
	\begin{align*}
		& I_{241} = \int_{|\xi |> 2|\xi - \xi_1|} \frac{\widehat{u} \widehat{w} \widehat{v_1}}{\bra{\xi_1}^{1-2b_2} \bra{\xi -\xi_1}^{2b_2} \bra{\tau - \tau_1 +  |\xi - \xi_1|^2}^{b_1} \bra{\tau + |\xi|^2}^{c_1}}, \\
		& I_{242} = \int_{|\xi |> 2|\xi - \xi_1|} \frac{\widehat{u} \widehat{w} \widehat{v_1}}{ \bra{\xi_1}^{1-2b_2} \bra{\xi -\xi_1}^{2b_2} \bra{\tau_1 \pm |\xi_1|}^{b_2} \bra{\tau - \tau_1 +  |\xi - \xi_1|^2}^{b_1-b_2} \bra{\tau + |\xi|^2}^{c_1} }, \\
		&  I_{243} = \int_{|\xi |> 2|\xi - \xi_1|} \frac{\widehat{u} \widehat{w} \widehat{v_1}}{ \bra{\xi_1}^{1-2b_2} \bra{\xi -\xi_1}^{2b_2} \bra{\tau_1 \pm |\xi_1|}^{b_2} \bra{\tau - \tau_1 +  |\xi - \xi_1|^2}^{b_1} \bra{\tau + |\xi|^2}^{c_1-b_2} }
	\end{align*}
	The estimates for $I_{24}$ are essentially the same as for $I_{22}$ with slight modifications. However, for completeness, we will show here the proof of estimates for $I_{24}$.
	
	\vspace*{2mm}
	For $I_{241}$, using the H\"older inequality we get
	\begin{equation}
		\label{I241}
		\begin{split}
			I_{241}  \leq &  \norm{\mathcal{F}^{-1} \left( \bra{\xi}^{1-2b_2} |\widehat{u}| \right)} _{L_t^2L_x^{r_1}} \\
			& \norm{\mathcal{F}^{-1} \left( \bra{\xi}^{-2b_2} \bra{\tau +  |\xi|^2}^{-b_1} |\widehat{w}| \right)}_{L_t^{q_2} L_x^{r_2}} \\
			& \norm{\mathcal{F}^{-1} \left( \bra{\tau +  |\xi|^2}^{-c_1} |\widehat{v_1}| \right)}_{L_t^{q_3} L_x^{r_3}},
		\end{split}
	\end{equation}
	with the H\"older conditions
	\begin{align}
		\label{Holder I241 1} & \frac{1}{q_2} + \frac{1}{q_3} = \frac{1}{2}, \\
		\label{Holder I241 2} & \delta(r_1) + \delta(r_2) + \delta(r_3) = \frac{d}{2}.
	\end{align}
	
	We use the Sobolev's embedding theorem to treat the first two terms of \eqref{I241}
	\begin{align*}
		& \norm{\mathcal{F}^{-1} \left( \bra{\xi}^{1-2b_2} |\widehat{u}| \right)} _{L_t^2L_x^{r_1}}  \leq \norm{u}_2, \\
		& \norm{\mathcal{F}^{-1} \left( \bra{\xi}^{-2b_2} \bra{\tau +  |\xi|^2}^{-b_1} |\widehat{w}| \right)}_{L_t^{q_2} L_x^{r_2}} \leq \norm{\mathcal{F}^{-1} \left(  \bra{\tau +  |\xi|^2}^{-b_1} |\widehat{w}| \right)}_{L_t^{q_2} L_x^{r_2'}},
	\end{align*}
	provided that
	\begin{align*}
		& 1- 2b_2 \geq \delta(r_1),\\
		& 2b_2 \geq \delta(r_2) - \delta(r_2').
	\end{align*}
	Next, we use  Lemma \ref{lemma ineq schr} to estimate $\norm{\mathcal{F}^{-1} \left(  \bra{\tau +  |\xi|^2}^{-b_1} |\widehat{w}| \right)}_{L_t^{q_2} L_x^{r_2'}}$ and $\norm{\mathcal{F}^{-1} \left( \bra{\tau +  |\xi|^2}^{-c_1} |\widehat{v_1}| \right)}_{L_t^{q_3} L_x^{r_3}}$, that leads to the following conditions
	\begin{align*}
		&  \frac{2}{q_2} = 1- \eta(1-\gamma) \frac{b_1}{b_0}, \\
		& \delta(r_2') = (1-\eta)(1-\gamma) \frac{b_1}{b_0}, \\
		& \frac{2}{q_3} = 1- \eta (1-\gamma ) \frac{c_1}{b_0}, \\
		& \delta(r_3) = (1-\eta)(1-\gamma) \frac{c_1}{b_0}.
	\end{align*}
	\eqref{Holder I241 1} and \eqref{Holder I241 2} then become
	\begin{align}
		\label{Condition I241 1}  & \eta (1-\gamma) = b_0, \\
		\label{Condition I241 2} &  \frac{(1-\eta)(1-\gamma)}{b_0} \geq \frac{d}{2} -1.
	\end{align}
	Now, we can see that \eqref{Condition I241 1}-\eqref{Condition I241 2} are exactly \eqref{Condition I221 1}-\eqref{Condition I221 2}, so similarly we can take
	\[
	\eta = \frac{2}{d} \, \text{ and } 1-\gamma = \frac{b_1 d}{2}.
	\] 
	And, therefore,
	\begin{equation}
		\label{final I241} |I_{241}| \lesssim  T^{\theta_{241}} \norm{w}_2 \norm{v_1}_2 \norm{u}_2,
	\end{equation}
	with 
	\begin{equation}
			\label{t I241}  \theta_{241} = (1- \frac{b_1 d}{2}) (\frac{3}{2} - b_1).
	\end{equation}
	
	\vspace{2mm}
	Estimates for $I_{242}$ and $I_{243}$ are the same as the estimates for $I_{222}$ and $I_{223}$ respectively. So, we only show here the main results.
	
	For $I_{242}$,
	\begin{equation}
		\label{final I 242}
		I_{242} \lesssim  T^{\theta_{242}} \norm{u}_2 \norm{w}_2 \norm{v_1}_2,
	\end{equation}
	where
	\begin{equation}
		\label{t I242} \theta_{242} = (1-\frac{d b_1}{2}) ( 1- [b_1-b_2 - 1/2]_+).
	\end{equation}
	
	For $I_{243}$,
	\begin{equation}
		\label{final I 243}
		I_{243} \lesssim  T^{\theta_{243}} \norm{u}_2 \norm{w}_2 \norm{v_1}_2,
	\end{equation}
	where
	\begin{equation}
		\label{t I243}  \theta_{243}= (1-\frac{d b_1}{2}) (\frac{3}{2} -b_1).
	\end{equation}

	
	\textbf{Estimate} $I_4$. Using the Schwartz inequality, we have
	\begin{align*}
		I_4 & = \int \frac{|\xi|\bra{\xi}^{k_2}\widehat{w} \widehat{\overline{w}} \widehat{v_2} }{ \bra{\xi_1} \bra{\xi -\xi_1} \bra{\tau- \tau_1 + |\xi - \xi_1|^2}^{b_1} \bra{\tau_1 - |\xi_1|^2}^{b_1} \bra{\tau \pm |\xi|}^{c_2}} \\
		& \leq \int \frac{\bra{\xi}^{k_2}\widehat{w} \widehat{\overline{w}} \widehat{v_2} }{  \bra{\tau- \tau_1 + |\xi - \xi_1|^2}^{b_1} \bra{\tau_1 - |\xi_1|^2}^{b_1} \bra{\tau \pm |\xi|}^{c_2}}.
	\end{align*}
	Then, \eqref{ineq 3} gives us
	\[
	\bra{\xi}^{k_2} \lesssim \bra{\tau - \tau_1 +  |\xi - \xi_1|^2}^{k_2/2} + \bra{\tau_1 -  |\xi_1|^2}^{k_2/2} + \bra{\tau \pm |\xi|}^{k_2/2},
	\]
	or
	\[
	 \bra{\xi}^{2c_2-1} \lesssim \bra{\tau - \tau_1 +  |\xi - \xi_1|^2}^{c_2-1/2} + \bra{\tau_1 -  |\xi_1|^2}^{c_2-1/2} + \bra{\tau \pm |\xi|}^{c_2-1/2}.
	\]
	Thus,
	\begin{align*}
		I_4 & \lesssim \int \frac{ \left( \bra{\tau - \tau_1 +  |\xi - \xi_1|^2}^{c_2-1/2} + \bra{\tau_1 -  |\xi_1|^2}^{c_2-1/2} + \bra{\tau \pm |\xi|}^{c_2-1/2} \right) \widehat{w} \widehat{\overline{w}}  \widehat{v_2}}{\bra{\tau- \tau_1 + |\xi - \xi_1|^2}^{b_1} \bra{\tau_1 - |\xi_1|^2}^{b_1} \bra{\tau \pm |\xi|}^{c_2} } \\
		& \lesssim I_{41} + I_{42}.
	\end{align*}
	Where
	\begin{align*}
		& I_{41} = \int \frac{  \left(  \bra{\tau - \tau_1 +  |\xi - \xi_1|^2}^{c_2-1/2} + \bra{\tau_1 -  |\xi_1|^2}^{c_2-1/2} \right)  \widehat{w} \widehat{\overline{w}}  \widehat{v_2} }{\bra{\tau- \tau_1 + |\xi - \xi_1|^2}^{b_1} \bra{\tau_1 - |\xi_1|^2}^{b_1} \bra{\tau \pm |\xi|}^{c_2}}, \\
		& I_{42} = \int \frac{  \widehat{w} \widehat{\overline{w}}  \widehat{v_2} }{\bra{\tau- \tau_1 + |\xi - \xi_1|^2}^{b_1} \bra{\tau_1 - |\xi_1|^2}^{b_1} \bra{\tau \pm |\xi|}^{1/2}}.
	\end{align*}
	
	\vspace*{2mm}
	$I_{41}$ involves two terms however from previous estimates we can see that they lead to the same estimate. Thus, using the H\"older inequality we get
	\begin{align}
		\label{I41}
		I_{41} \lesssim  &  \norm{\mathcal{F}^{-1} \left(   \bra{\tau + |\xi|^2}^{-(b_1 -c_2 + 1/2)} |\widehat{w}|  \right)}_{L_t^{q_1} L_x^{r_1}}  \norm{\mathcal{F}^{-1} \left( \bra{\tau +  |\xi|^2}^{-b_1} |\widehat{w}| \right)}_{L_t^{q_2} L_x^{r_2}} \\
		& \qquad \norm{\mathcal{F}^{-1} \left(  \bra{\tau \pm |\xi|}^{-c_2} |\widehat{v_2}| \right)}_{L_t^{q_3} L_x^{2}}.
	\end{align}
	Provided that
	\begin{align}
		\label{Holder I41 1} \frac{1}{q_1} + \frac{1}{q_2} + \frac{1}{q_3} =1, \\
		\label{Holder I41 2} \delta(r_1) + \delta(r_2) = \frac{d}{2}.
	\end{align}
	The three terms of \eqref{I41} are estimated by using Lemma \ref{lemma ineq schr}, that leads to the following restrictions
	\begin{align*}
		& \frac{2}{q_1} = 1- \eta (1-\gamma) \frac{b_1-c_2 + 1/2}{b_0}, \\
		& \frac{2}{q_2} = 1- \eta (1-\gamma) \frac{b_1}{b_0},\\
		& \frac{2}{q_3} = 1- (1-\gamma) \frac{c_2}{b_0}, \\
		& \delta(r_1) = (1-\eta)(1-\gamma) \frac{b_1-c_2+1/2}{b_0}, \\
		& \delta(r_2) = (1-\eta)(1-\gamma) \frac{b_1}{b_0}.
	\end{align*}
	Then \eqref{Holder I41 1} and \eqref{Holder I41 2} become
	\begin{align}
		\label{Condition I41 1} & (1-\gamma) \left( \eta (2b_1 -c_2+1/2) +c_2 \right) =b_0, \\
		\label{Condition I41 2} & (1-\eta)(1-\gamma) (2b_1 -c_2 + 1/2) = \frac{d}{2} b_0.
	\end{align}
	So, we can take
	\[
	\eta = \frac{2b_1-(1+d/2) c_2 + 1/2}{(2b_1-c_2+1/2)(d/2+1)},
	\]
	if $b_1$ and $c_2$ (or $b_2$) satisfy 
	\begin{equation}
		\label{auxi 4} 2b_1 + (1+d/2) b_2 > \frac{1+d}{2}.
	\end{equation}
	Then
	\[
	1-\gamma = \frac{b_0(d+2)}{4b_1+1}.
	\]

	Note that we can choose $b_0=b_1$ and the condition $1-\gamma<1$ always holds. Hence,
	\begin{equation}
		\label{final I41}
		|I_{41}| \lesssim  T^{\theta_{41}} |w|_2^2 |v_2|,
	\end{equation}
	where
	\begin{equation}
		\label{t I41}  \theta_{41} = (1-\frac{b_0(d+2)}{4b_1+1}) \left(b_1+b_2 + 1/2 - [b_1+b_2-1]_{+}\right).
	\end{equation}
	
	\vspace*{2mm}
	For $I_{42}$, using the H\"older inequality we get
	\begin{equation}
		\label{I42}
		\begin{split}
			I_{42} & \leq  \norm{\mathcal{F}^{-1}\left(\bra{\tau + |\xi|^2}^{-b_1} |\widehat{w}|\right)}_{L_t^{q_1}L_x^4}^2 \norm{\mathcal{F}^{-1} \left( \bra{\tau \pm |\xi|}^{-1/2} |\widehat{v_2}|\right)}_{L_t^{q_2} L_x^2}. 
		\end{split}
	\end{equation}
	Where
	\begin{align}
		\label{Holder I42 1} \frac{2}{q_1} + \frac{1}{q_2} =1.
	\end{align}
	Using Lemma \ref{lemma ineq schr} we have the following constraints
	\begin{align*}
		& \frac{2}{q_1} = 1- \eta(1-\gamma)\frac{b_1}{b_0}, \\
		& \frac{2}{q_2} = 1- (1-\gamma) \frac{1}{2b_0}, \\
		& \delta(4)= \frac{d}{4} = (1-\eta)(1-\gamma) \frac{b_1}{b_0}.
	\end{align*}
	Hence,
	\begin{align}
		\label{Condition I42 1} & (1-\gamma)(4\eta b_1 +1) = 2b_0, \\
		\label{Condition I42 2} & (1-\eta)(1-\gamma)b_1=b_0 \frac{d}{4}.
	\end{align}
	Thus, we can take 
	\begin{align*}
		& \eta = \frac{8b_1 -d}{4db_1+8b_1}, \\
		& 1-\gamma = \frac{(d+2)b_0}{4b_1 + 1}.
	\end{align*}
	Note that to ensure  $1-\gamma < 1$, we need 
	\[
	b_0 < \frac{4b_1+1}{d+2},
	\]
	so in order to choose $b_0>\frac{1}{2}$, $b_1$ should satisfies
	\[
	\frac{4b_1 + 1}{d+2} > \frac{1}{2}
	\]
	or $b_1 > d/8$ which holds in both cases of $d$. Thus, we can take $b_0=b_1$ in this case.
	
	Therefore,
	\begin{equation}
		\label{final I42} |I_{42}| \lesssim  T^{\theta_{42}} |w|_2^2 |v_2|_2.
	\end{equation}
	Where
	\begin{equation}
		\label{t I42}  \theta_{42} = (1- \frac{(d+2)b_1}{4b_1+1}) \left(\frac{3}{2} - [0]_+\right).
	\end{equation}
	
	\textbf{Estimate $I_5$.} We have
	\begin{align*}
		I_5 & = \int \frac{\xi^{(1)} \tau  \bra{\xi}^{k_2}\widehat{w} \widehat{\overline{w}} \widehat{v_2} }{|\xi| \bra{\xi_1} \bra{\xi -\xi_1} \bra{\tau- \tau_1 + |\xi - \xi_1|^2}^{b_1} \bra{\tau_1 - |\xi_1|^2}^{b_1} \bra{\tau \pm |\xi|}^{c_2}} \\
		& \leq \int \frac{ |\tau|  \bra{\xi}^{k_2}|\widehat{w}| |\widehat{\overline{w}}| |\widehat{v_2}| }{ \bra{\xi_1} \bra{\xi -\xi_1} \bra{\tau- \tau_1 + |\xi - \xi_1|^2}^{b_1} \bra{\tau_1 - |\xi_1|^2}^{b_1} \bra{\tau \pm |\xi|}^{c_2}} \\
		& \leq I_{51} + I_{52},
	\end{align*}
	where
	\[
	\begin{split}
		I_{51}& = \int_{|\tau| <2|\xi|} \frac{ |\tau|  \bra{\xi}^{k_2}|\widehat{w}| |\widehat{\overline{w}}| |\widehat{v_2}| }{ \bra{\xi_1} \bra{\xi -\xi_1} \bra{\tau- \tau_1 + |\xi - \xi_1|^2}^{b_1} \bra{\tau_1 - |\xi_1|^2}^{b_1} \bra{\tau \pm |\xi|}^{c_2}} \\
		& \lesssim |I_4|.
	\end{split}
	\]
	and we only need to estimate
	\[
	\begin{split}
		I_{52} & = \int_{|\tau| \geq 2|\xi|} \frac{ |\tau|  \bra{\xi}^{k_2}|\widehat{w}| |\widehat{\overline{w}}| |\widehat{v_2}| }{ \bra{\xi_1} \bra{\xi -\xi_1} \bra{\tau- \tau_1 + |\xi - \xi_1|^2}^{b_1} \bra{\tau_1 - |\xi_1|^2}^{b_1} \bra{\tau \pm |\xi|}^{c_2}}\\
		& = \int_{|\tau| \geq 2|\xi|} \frac{ |\tau|  \bra{\xi}^{2c_2-1}|\widehat{w}| |\widehat{\overline{w}}| |\widehat{v_2}| }{ \bra{\xi_1} \bra{\xi -\xi_1} \bra{\tau- \tau_1 + |\xi - \xi_1|^2}^{b_1} \bra{\tau_1 - |\xi_1|^2}^{b_1} \bra{\tau \pm |\xi|}^{c_2}}
	\end{split}
	\]
	We observe that, if $|\tau| \geq 2 |\xi|$ then
	\[
	|\tau \pm |\xi|| \geq |\tau| - |\xi| \geq \frac{|\tau|}{2},
	\]
	or 
	\[
	|\tau|^{c_2} \lesssim \bra{\tau \pm |\xi|}^{c_2}.
	\]
	
	That implies
	\[
	I_{52} \leq \int_{|\tau| \geq 2|\xi|} \frac{ |\tau|^{1-c_2}  \bra{\xi}^{2c_2-1}|\widehat{w}| |\widehat{\overline{w}}| |\widehat{v_2}| }{ \bra{\xi_1} \bra{\xi -\xi_1} \bra{\tau- \tau_1 + |\xi - \xi_1|^2}^{b_1} \bra{\tau_1 - |\xi_1|^2}^{b_1}}
	\]
	By the way,   \eqref{ineq 5} tells us
	\[
	\bra{\xi_1}^{2(1-c_2)} \bra{\xi - \xi_1}^{2(1-c_2)} \bra{\tau - \tau_1 +  |\xi - \xi_1|^2}^{1-c_2} \bra{\tau_1 - |\xi_1|^2}^{1-c_2} \gtrsim \bra{\tau}^{1-c_2}.
	\]
	Combining with the Cauchy-Schwartz inequality
	\[
	\bra{\xi_1}^{2c_2-1} + \bra{\xi - \xi_1}^{2c_2-1} \geq \bra{\xi}^{2c_2-1}
	\]
	we obtain
	\[
	I_{52} \leq I_{521} + I_{522}.
	\]
	Where
	\begin{align*}
		& I_{521} = \int_{|\tau| \geq 2|\xi|} \frac{ |\widehat{w}| |\widehat{\overline{w}}| |\widehat{v_2}| }{ \bra{\xi_1}^{2c_2-1}  \bra{\tau- \tau_1 + |\xi - \xi_1|^2}^{b_1+c_2-1} \bra{\tau_1 - |\xi_1|^2}^{b_1+c_2-1}}, \\
		& I_{522} = \int_{|\tau| \geq 2|\xi|} \frac{ |\widehat{w}| |\widehat{\overline{w}}| |\widehat{v_2}| }{  \bra{\xi -\xi_1}^{2c_2-1} \bra{\tau- \tau_1 + |\xi - \xi_1|^2}^{b_1+c_2-1} \bra{\tau_1 - |\xi_1|^2}^{b_1+c_2-1}}.
	\end{align*}
	In our analysis, $I_{521}$ and $I_{522}$ are similar so we consider only the estimate for $I_{521}$.\\
	Using the H\"older inequality we get
	\begin{equation}
		\label{I521} 
		\begin{split}
			|I_{521}| &\leq \norm{\mathcal{F}^{-1} \left( \bra{\xi}^{-(2c_2-1)}\bra{\tau + |\xi|^2}^{-(b_1+c_2-1)} |\widehat{w}| \right)}_{L_t^{4}L_x^{r_1}}\\
			& \quad  \norm{\mathcal{F}^{-1} \left( \bra{\tau +  |\xi|^2}^{-(b_1+c_2-1)} |\widehat{w}| \right)}_{L_t^{4} L_x^{r_2}} \norm{v_2}_2.
		\end{split}
	\end{equation}
	For the convenience, we rewrite \eqref{I521} using the notation of $b_2$ as follows
	\begin{equation}
		\label{I521_2} 
		\begin{split}
			|I_{521}| &\leq \norm{\mathcal{F}^{-1} \left( \bra{\xi}^{-(1-2b_2)}\bra{\tau + |\xi|^2}^{-(b_1-b_2)} |\widehat{w}| \right)}_{L_t^{4}L_x^{r_1}}\\
			& \quad  \norm{\mathcal{F}^{-1} \left( \bra{\tau +  |\xi|^2}^{-(b_1-b_2)} |\widehat{w}| \right)}_{L_t^{4} L_x^{r_2}} \norm{v_2}_2.
		\end{split}
	\end{equation}
	$r_1$ and $r_2$ then satisfy 
	\begin{equation}
		\label{Holder I521} \delta(r_1) + \delta(r_2) = \frac{d}{2}.
	\end{equation}
	The first term of \eqref{I521_2} is estimated by using the Sobolev's embedding 
	\[
	W^{1-2b_2,r_1'} \hookrightarrow L_x^{r_1},
	\]
	provided that
	\[
	1-2b_2 > \delta(r_1) - \delta(r_1').
	\]
	Then, we can process as in previous parts that uses the lemma \ref{lemma ineq schr} and leads to the following constrains
	\begin{align*}
		& \frac{1}{2} = 1- \eta(1-\gamma) \frac{b_1-b_2}{b_0}, \\
		& \delta(r_1')= \delta(r_2) >\delta(r_1) + 2b_2-1.
	\end{align*}
	That is equivalent to
	\begin{align}
		\label{Condition I521 1} & \eta(1-\gamma)(b_1-b_2) = \frac{b_0}{2}, \\
		\label{Condition I521 2} & 2(1-\eta)(1-\gamma) \frac{b_1-b_2}{b_0} > \frac{d}{2} + 2b_2-1.
	\end{align}
	Combining \eqref{Condition I521 1}-\eqref{Condition I521 2} we obtain
	\[
	\eta \leq \frac{1}{2b_2 + d/2}.
	\]
	That suggests us to take $\eta = \frac{1}{2b_2 + d/2}$, then 
	\[
	1- \gamma = \frac{b_0 (2b_2 + d/2)}{2(b_1-b_2)}.
	\]
	We have that $b_1>1/2>b_2$ so it remains to verify that we can choose $b_0> 1/2$ so that $1-\gamma < 1$ and $(1-\gamma)(b_1-b_2) \leq b_0$.
	
	The constrain $1-\gamma < 1$ requires
	\[
	b_0 < \frac{2(b_1-b_2)}{2b_2 + d/2},
	\]
	thus, $b_1,b_2$ must satisfy
	\[
	\frac{2(b_1-b_2)}{2b_2 + d/2} > \frac{1}{2},
	\]
	or
	\begin{equation}
		\label{I521 auxi 1}
		b_2<\frac{2}{3}b_1- \frac{d}{12}.
	\end{equation}
	
	Combining  \eqref{Condition I521 1} with constrain $(1-\gamma)(b_1-b_2) \leq b_0$ leads to 
	\[
	\eta \geq \frac{1}{2}
	\]
	or equivalently
	\begin{equation}
		\label{I521 auxi 2}
		b_2 \leq 1- \frac{d}{4}.
	\end{equation}
	From \eqref{I521 auxi 1}, \eqref{I521 auxi 2} and the constrain $b_1>1/2$, we require that
	\begin{equation}
		\label{I521 auxi 3}
		\left\{
		\begin{split}
			& b_2 < \frac{1}{6} \, \text{ if } d=2, \\
			& b_2< \frac{1}{12} \, \text{ if } d=3.
		\end{split}
		\right.
	\end{equation}
	Therefore, we have
	\begin{equation}
		\label{final I521}
		|I_{521}| \lesssim  T^{\theta_{521}} \norm{w}_2^2 \norm{v_2}_2,
	\end{equation}
	where
	\begin{equation}
		\label{t I521}  \theta_{521} = 2 (1- \frac{b_0 (2b_2 + d/2)}{2(b_1-b_2)}) (b_1-b_2) \left( 1- \frac{[b_1-b_2-1/2]_+}{b_1-b_2}\right).
	\end{equation}
	
	\subsection{Proof of the main theorem}  \label{subsection proof}
	\begin{proof}
		We are going to determine the condition of $b_1$ and $b_2$. Let recall that \(k_2= 1-2b_2\) so that the range of \(b_2\) defines the range of \(k_2\) or \(l\) in Theorem \ref{theorem main}. In other hand, since we fix the order of Sobolev space for \(\psi\) then \(b_1\) can be chosen more freely so that all the condition hold.
	
	Combining \eqref{auxi 1},\eqref{auxi 2}, \eqref{auxi 3}, \eqref{auxi 4} and \eqref{I521 auxi 3} we have
	\begin{align*}
		& b_2<1-b_1,\\
		& b_1 < \frac{2+2b_2}{d+ 4b_2}, \\
		& b_1<2/d, \\
		& b_2<\frac{1}{6} \, \text{ if } d=2, \quad b_2<\frac{1}{12} \, \text{ if } d=3, \\
		& 2b_1 + (1+d/2)b_2 > \frac{d+1}{2}.
	\end{align*}
	Therefore, we can conclude the conditions for \(b_1, b_2\) as follows.\\
	 For $d=2$, 
	\begin{equation}
		\label{d=2 auxi condition} \left \{
		\begin{split}
			& \frac{3}{4} < b_1 < \frac{5}{6}, \\
			& 0 \leq b_2 <\frac{1}{6} \text{ or } \frac{2}{3} < k_2 \leq 1.
		\end{split}
		\right.
	\end{equation}
	For $d=3$,
	\begin{equation}
		\label{d=3 auxi condition} \left \{
		\begin{split}
			& \frac{1}{2} < b_1 < \frac{13}{20}, \\
			& 0 < b_2 <\frac{1}{12} \text{ or } \frac{5}{6} < k_2 <1.
		\end{split}
		\right.
	\end{equation}
	Those conditions combining with our argument explanation finish the proof of Theorem \ref{theorem main}.
	\end{proof}
	\section{Conclusion and open questions} \label{Section Conclusion}
	\begin{itemize}
		\item [i)] Our result basically improves the regularity condition in the local Cauchy problem for the Zakharov-Rubenchik system in \(2 \text{ or } 3\) dimension that was studied in \cite{Ponce2005} and \cite{Luong2018}. The proof is based on the derivation of corresponding Bourgain spaces and carefully estimations of the terms involved.
		\item [ii)] The result also strengthens the global weak solution obtained by extending the local solution under certain condition of parameters of the system since at least the first component \(\psi\) lies in the energy space. We however, not able to reach the same goal with \(\rho \text{ and } \phi\) due to the technical difficulties.
		\item [iii)] This paper is also our preparation in more important problem where we take into account the ``model parameter'' $\epsilon$ and expect to get the existence time of order $O(1/\epsilon^\alpha)$ with \(\alpha>0\).
		
		 It is also interesting to study the original Benney-Roskes system with ``full-dispersion'' derived in \cite{Lannes2013} where the Schr\"odinger operator is replaced by 
	\begin{equation*}
		\frac{\omega(\textbf{k} + \epsilon D) - \omega(\textbf{k})}{\epsilon},
	\end{equation*}
	the dispersion \(\omega\) is given by 
	\begin{equation*}
		\omega(\xi) = \left(|\xi| \tanh (\sqrt{\mu} |\xi|)\right)^{1/2},
	\end{equation*}
	where \(\epsilon, \mu\) are model-parameters.\\ 
	It would be possible if one could derive a ``Strichartz type'' estimate for the full dispersion operator.
	\end{itemize}
	\bibliographystyle{amsplain}

\end{document}